\theoremstyle{plain} \newtheorem{theorem}{Theorem}[section]
\newtheorem{lemma}[theorem]{Lemma}
\newtheorem{coro}[theorem]{Corollary}
\newtheorem{prop}[theorem]{Proposition}
\theoremstyle{definition} \newtheorem{definition}[theorem]{Definition}
\theoremstyle{remark} \newtheorem{remark}{Remark}
\renewcommand{\Re}{\operatorname{Re}}
\newcommand{\R}{{\mathbb R}}
\newcommand{\C}{{\mathbb C}}
\newcommand{\Z}{{\mathbb Z}}
\newcommand{\N}{{\mathbb N}}
\renewcommand{\S}{{\mathbb S}}
\newcommand{\T}{{\mathbb T}}
\newcommand{\sM}{{\mathcal M}}
\newcommand{\sS}{{\mathcal S}}
\newcommand{\sR}{{\mathcal R}}
\newcommand{\sN}{{\mathcal N}}
\newcommand{\eps}{\varepsilon}
\newcommand{\ls}{\lesssim}
\begin{document}

\title[The energy-critical NLS on a product of spheres]{The energy-critical nonlinear Schr\"odinger equation on a product of spheres}
\author[S.~Herr]{Sebastian~Herr}
\author[N.~Strunk]{Nils~Strunk}

\address{Universit\"at Bielefeld, Fakult\"at f\"ur Mathematik, Postfach 100131, 33501 Bielefeld, Germany}
\email{herr@math.uni-bielefeld.de}
\email{strunk@math.uni-bielefeld.de}

\begin{abstract}
Let $(M,g)$ be a compact smooth $3$-dimensional Riemannian manifold
without boundary. It is proved that the energy-critical nonlinear
Schr\"odinger equation is globally well-posed for small initial data
in $H^1(M)$, provided that a certain tri-linear estimate for free
solutions holds true. This estimate is known to hold true on the
sphere and tori in $3d$ and verified here in the case
$\S\times\S^2$. The necessity of a weak form of this tri-linear estimate is also discussed.
\end{abstract}
\keywords{Nonlinear Schr\"odinger Equation, compact manifold, well-posedness}
\subjclass[2010]{35Q55 (Primary); 35R01 (Secondary)}
\maketitle

\section{Introduction}\label{sect:intro}
Burq--G\'erard--Tzvetkov \cite{BGT04,BGT05a,BGT05b,BGT07} initiated a line of research on the well-posedness of nonlinear Schr\"odinger equations  on compact manifolds, extending Bourgain's results on tori \cite{B93a,B07}. More precisely, on a given compact smooth $d$-dimensional Riemannian manifold $(M,g)$ without boundary, the Cauchy-problem
\begin{equation}\label{eq:nls}
\left\{
\begin{aligned}
i\partial_t u+\Delta_gu&=\pm|u|^{p-1} u \\
u\big|_{t=0}&=u_0 \in H^s(M)
\end{aligned}
\right.
\end{equation}
is studied, where $u_0 \in H^s(M)$ is given initially and the aim is to prove the existence and uniqueness of a solution $u \in C([0,T),H^s(M,\C))$ and its continuous dependence  on $u_0$. For sufficiently smooth solutions $u$ the $L^2(M)$-norm and the energy
\[E(u)(t)=\frac12 \int_M|\nabla u(t,x)|^2 \,dx \pm \frac{1}{p+1} \int_M|u(t,x)|^{p+1} \,dx \]
are conserved quantities.
\medskip

On $M=\R^d$, solutions $u$ of the equation \eqref{eq:nls} can be rescaled to solutions $u_\lambda$ by setting
\[
u_\lambda(t,x)=\lambda^{\frac{2}{p-1}} u(\lambda^2 t, \lambda x) \qquad (\lambda > 0).
\]
The Sobolev semi-norm $\|\cdot\|_{\dot H^s(\R^d) }$ is invariant under this rescaling iff $s=s_c:=\frac{d}{2}-\frac{2}{p-1}$ and we call the range $s>s_c$ sub-critical, $s<s_c$ super-critical and $s=s_c$ critical.
In dimension $d=3$, the quintic problem ($p=5$) is called energy-critical since $s_c=1$. In this case, well-posedness in the critical space $H^1(M)$ is a key ingredient in the analysis of global well-posedness: For small initial data this immediately follows from the conservation of the energy $E(u)$, 
and in the defocusing case it serves as a starting point for a proof of global well-posedness for large initial data.

\medskip

Recently, first global results for \eqref{eq:nls} with $p=5$ in the critical space $H^1(M)$ have been obtained on the specific manifolds $M=\T^3$ \cite{HTT11,IP12,S14} and  $M=\S^3$ \cite{H13,PTW13} with standard metrics. These critical results crucially rely on precise spectral information. In this paper, we consider the manifold $M=\S\times\S^2$ with the standard metric. With regard to concentration of eigenfunctions and localization of the spectrum of $\Delta_g$ this is an intermediate case between $\mathbb{T}^3$ and $\mathbb{S}^3$, as explained in \cite[p.~257, l.~26ff]{BGT05b}. We consider this as a toy model for the central question concerning the critical well-posedness on arbitrary smooth compact Riemannian $3$-manifold, cp.\ \cite[p.~257, l.~31ff]{BGT05b}, as it forces us to unify some of the methods developed in \cite{HTT11,H13,HTT14}. On the other hand, its treatment requires new ideas, which we will point out below.

\medskip

Precisely, we focus on the following Cauchy-problem
\begin{equation}\label{eq:nls-sxs2}
\left\{
\begin{aligned}
i\partial_t u+\Delta_gu&=\pm|u|^{4} u\\
u\big|_{t=0}&=u_0\in H^s(\S\times\S^2)
\end{aligned}
\right.
\end{equation}
and we will prove the following in the critical case $s=1$:
\begin{theorem}\label{thm:main}
The Cauchy problem \eqref{eq:nls-sxs2} is globally well-posed for small initial data in $H^1(\S\times\S^2)$.
\end{theorem}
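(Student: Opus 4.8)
The plan is to solve \eqref{eq:nls-sxs2} at the critical regularity $s=1$ by a contraction mapping argument, reducing everything to one linear estimate and one quintic nonlinear estimate in a scaling-critical function space. Since the nonlinearity is quintic and the problem is energy-critical on a three-manifold, I cannot afford any loss of derivatives, so rather than work in Strichartz spaces $L^p_tL^q_x$ directly I would use the atomic and bounded-variation spaces $U^2_\Delta$, $V^2_\Delta$ adapted to the flow $e^{it\Delta_g}$, following the framework of \cite{HTT11}. Concretely, I would build the solution space $X^1$ and the dual space for the Duhamel term out of $\ell^2$-sums over dyadic frequencies $N$ of $\la N\ra$-weighted, frequency-localised $U^2_\Delta$ (resp.\ $V^2_\Delta$) pieces. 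These spaces embed continuously into $C(\R;H^1)$, satisfy the linear estimate $\|e^{it\Delta_g}u_0\|_{X^1}\ls\|u_0\|_{H^1}$, and, via the Duhamel formula together with the $U^2_\Delta$--$V^2_\Delta$ duality, reduce the inhomogeneous estimate to bounding the nonlinearity in the appropriate dual norm. Because all of these estimates are global in time, small initial data will yield a global solution directly, with no need for energy conservation beyond smallness.

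The crux is the multilinear Duhamel estimate
\begin{equation*}
\Bigl\| \int_0^t e^{i(t-s)\Delta_g}\prod_{j=1}^{5} u_j^{(\pm)}(s)\,ds \Bigr\|_{X^1} \ls \prod_{j=1}^{5}\|u_j\|_{X^1},
\end{equation*}
where $u_j^{(\pm)}$ denotes $u_j$ or $\overline{u_j}$. By duality this becomes a six-linear $L^2$-form (the five factors paired against a unit $V^2_\Delta$-type test function), and by symmetry I would first reduce to the interaction of the highest frequencies. Decomposing each factor dyadically as $u_j=\sum_{N_j}P_{N_j}u_j$, I would group the factors into products of a high- and a low-frequency piece and estimate each such product by the tri-linear $L^2$ estimate for free solutions—the main harmonic-analytic input—transferred onto $U^2_\Delta$ functions by the transference principle. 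The decisive point is that this tri-linear estimate carries a gain which is a positive power of the ratio of the lowest to the highest frequency among the three factors; this off-diagonal gain is exactly what makes the subsequent summation over $N_1,\dots,N_5$ absolutely convergent and closes the critical ($s=1$) estimate with no derivative loss.

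The main obstacle, as signalled above, is the tri-linear estimate on $\S\times\S^2$ itself. The eigenvalues of $-\Delta_g$ are the sums $k^2+\ell(\ell+1)$ with $k\in\Z$, $\ell\in\N$, so the spectrum is neither as rigid as the lattice $|n|^2$ on $\T^3$ nor as tightly clustered as $\ell(\ell+2)$ on $\S^3$; controlling how often a given frequency shell is hit therefore mixes the lattice-point counting of the circle factor with the spectral-cluster and eigenfunction-concentration phenomena of the spherical factor, precisely the intermediate behaviour noted in \cite{BGT05b}. I would establish the estimate by separating the $\S$ and $\S^2$ variables, combining a one-dimensional divisor-type bound on the circle with the spectral projector bounds for $\S^2$, and then reassembling the two tensor factors. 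Once the tri-linear estimate is in hand with its frequency gain, the quintic estimate and hence the contraction follow as above, and a standard argument upgrades the fixed point to uniqueness and Lipschitz continuous dependence on the data, completing the proof.
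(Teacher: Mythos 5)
Your overall architecture---contraction in $U^2/V^2$-based critical spaces, reduction of the quintic Duhamel estimate to a tri-linear $L^2$ estimate for free solutions carrying an off-diagonal frequency gain---matches the paper's strategy. But your plan for proving the tri-linear estimate itself has a genuine gap, and it is the decisive one. You propose to combine ``a one-dimensional divisor-type bound on the circle'' with spectral projector bounds on $\S^2$; this is precisely the argument of \cite[Proposition~5.1]{BGT05b}, and it cannot work at critical regularity: the number-of-divisors bound inherently costs a factor $N^{\eps}$, which is exactly why \cite{BGT05b} stops at sub-quintic nonlinearities, and the paper points this out explicitly in the introduction. Moreover, a divisor count provides no mechanism whatsoever for the gain $\bigl(N_3/N_1+1/N_2\bigr)^{\delta}$ that you yourself identify as ``the decisive point'' for summing the dyadic decomposition. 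The paper's main innovation is to avoid counting the terms in the Plancherel-constrained sum altogether: after Plancherel in $(t,\theta)$ and the tri-linear spectral cluster estimate on $\S^2$ (which strips off the $L^2(\S^2)$-norm at the cost of $(N_2N_3)^{1/4}$), it returns to physical space and estimates exponential sums in $L^p_{t,\theta}(\tau_0\times\S)$ for $p>4$ via Bourgain's torus estimate (transferred by a Galilei transformation and a dummy-variable/Sobolev trick), combined with three almost-orthogonality reductions---in the $\S$-frequency, in the spherical-harmonic degree on $\S^2$, and in time---which confine the highest-frequency factor to a set $\sS_{N,M}$ of cardinality $\ls MN$ with $M\sim\max\{N_2^2/N_1,1\}$; a lattice-point count on annuli, interpolated with the $L^p$ bound, is what produces the $\delta$-gain with no $\eps$-loss. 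Without a substitute for this mechanism your tri-linear estimate, and with it the whole contraction, does not close at $s=1$.

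A second, smaller error: your claim that the estimates are global in time, so that small data yields a global solution ``with no need for energy conservation beyond smallness,'' is false on a compact manifold. The spectrum of $-\Delta_g$ on $\S\times\S^2$ consists of integers, so $e^{it\Delta}$ is $2\pi$-periodic in time, and no nontrivial space-time $L^2$ bound for products of free solutions can hold on all of $\R$. All estimates in the paper are on the fixed interval $\tau_0=[0,8\pi]$; globality is obtained, as in \cite{HTT11}, by iterating the local result and using conservation of mass and energy to keep the $H^1$-norm small. Relatedly, transferring the free estimate to $U^2$ atoms is immediate, but the nonlinear estimate requires it for $V^2$-type functions ($Y^0$), which needs the interpolation argument of \cite[Lemma~2.4]{H13} played against an $L^6$-Strichartz bound and an off-diagonal estimate; this is standard in the framework but is not automatic ``transference.''
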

As usual, this result includes the existence of (mild) solutions $u
\in C(\R,H^1(\S\times\S^2))$, uniqueness in a certain subspace,
smooth dependence on the initial data and persistence of higher initial $H^s$-regularity. Our methods also imply local well-posedness for arbitrarily large initial data in $H^1(\S\times\S^2)$ by standard arguments, which we omit. We refer the reader to \cite[Theorem 1.1 and 1.2]{HTT11} for more explanations. In \cite{BGT05b} the global well-posedness in $H^1$ has been proved in the sub-quintic case (i.e.\ $1<p<5$), see \cite[Theorem 1]{BGT05b} for a more complete statement and \cite[Appendix A]{BGT05b} for an ill-posedness result in a super-quintic case.

\medskip

Generally speaking, the method of proof used here is similar to the
cases $M=\T^3$ \cite{HTT11} and $M=\S^3$ \cite{H13} and ideas from
\cite{BGT05b,BGT07} are used in order to deal with the fact that the
spectral cluster estimates are not optimal on $M=\S\times \S^2$, see
\cite[Theorem 3 and Remark 2.1]{BGT05b}. However, in the critical case
the tri-linear estimate obtained in \cite[Proposition~5.1]{BGT05b}
cannot be used because of the $\eps$-loss, which essentially comes
from the number-of-divisor-bound \cite[Lemma 4.2]{BGT05b}. The main
new estimate is a critical tri-linear estimate for free solutions, see
Proposition \ref{prop:trilin_est}, which is also known for $M=\T^3$
(both rational \cite[Proposition~3.5 and its proof, in particular
(26)]{HTT11} and irrational \cite[Proposition~4.1]{S14}) and $M=\S^3$
\cite[Proposition~3.6 and its proof, see (20)]{H13}. From this
estimate we derive the nonlinear estimate which is used for the Picard
iteration argument, which is along the lines of \cite{H13}.

\medskip

We point out that this reduction of the well-posedness proof to critical
tri-linear estimates for free solutions is independent of the specific
manifold. Conversely, we find that a weak form ($\delta=0$) of the estimate in
Proposition \ref{prop:trilin_est} is necessary for a well-posedness
result in $H^1(M)$ with a smooth flow map, which again does not depend
on the specific manifold $M=\S\times \S^2$.

\medskip

This paper is organized as follows: We conclude this section by
introducing some notation. In Section \ref{sect:multlin_est} we prove
the crucial tri-linear estimate for free solutions. In Section
\ref{sect:funct} we describe how the tri-linear estimate can be
extended to a certain function space, which allows us to perform the
standard Picard iteration argument. In Section \ref{sect:nec} we
discuss the necessity of a weak form of the tri-linear estimate for free solutions.

\subsection*{Acknowledgments}
The authors acknowledge support from the German Research Foundation,
Collaborative Research Center 701. The first author would like to
thank Nicolas Burq for raising a question which lead to Section \ref{sect:nec}, the second author would like thank Benoit Pausader for helpful remarks.

\subsection*{Notation}\label{subsect:not}
Let $(M,g)$ be a compact smooth $3$-dimensional Riemannian manifold without boundary. The spectrum
$\sigma(-\Delta_g)$ of the Laplace--Beltrami operator can be listed as
$0=\lambda_0 < \lambda_1 < \ldots < \lambda_n \to +\infty$. Let $h_k\colon L^2(M)\to L^2(M)$ be the
spectral projector onto the eigenspace corresponding to the eigenvalue $\lambda_k$. For $f\in L^2(M)$ and a
dyadic number $N\in\N$, we define the projector 
\[
   P_Nf = \sum_{k\in\N_0:\;N\leq\langle\lambda_k\rangle^{\frac12}<2N} h_k(f),
\]
where $\langle x\rangle = (1+|x|^2)^{\frac12}$.
We define the usual $L^2$-based Sobolev space $H^s(M)=(1-\Delta_g)^{\frac{s}{2}}L^2(M)$, equipped with the norm
\[
   \|f\|_{H^s(M)} = \biggl(\sum_{k\in\N_0} \langle \lambda_k\rangle^{s} \|h_k(f)\|_{L^2(M)}^2\biggr)^{\frac12}.
\]
Due to $L^2$-orthogonality we have
\[
   \|f\|_{H^s(M)}^2\sim \sum_{N \geq 1} N^{2s}\|P_N f\|_{L^2(M)}^2.
\]
Here and in the sequel $\sum_{N\geq 1}$ indicates that we are summing over all $N=1,2,4,8,\ldots$.

In the case $M=\S\times\S^2$ we use the same notation for the spectrum and the spectral projectors as in
\cite[Section~5]{BGT05b}: The spectrum of $-\Delta=-\Delta_g$ is given by
\[
   \lambda_{m,n}=m^2+n^2+n,\quad (m,n)\in\Z\times \N_0.
\]
We denote by $\Pi_n\colon L^2(\S^2)\to L^2(\S^2)$ the spectral projector onto spherical harmonics of degree $n$ on $\S^2$. For functions $f$ on $M$ we write $\S\times\S^2\ni (\theta,\omega)\mapsto f(\theta,\omega)$. The $m$-th Fourier-coefficient of $f(\,\cdot\,,\omega)$ is defined by
\[
   \Theta_mf(\omega):=\frac{1}{2\pi} \int_0^{2\pi} f(\theta,\omega)e^{-im\theta} \,d\theta.
\]
Hence, for $f \in L^2(M)$, we have
\[
   f(\theta,\omega)=\sum_{(m,n)\in\Z\times \N_0}e^{im\theta}\Pi_n \Theta_m(f)(\omega)
\]
in the $L^2$-sense.
For dyadic $N$ we define the projector
\[
   P_Nf(\theta,\omega)
   = \sum_{\substack{(m,n)\in\Z\times \N_0:\\N\leq\langle\lambda_{m,n}\rangle^{\frac12}< 2N}}e^{im\theta}\Pi_n \Theta_m(f)(\omega).
\]
We define the Sobolev space $H^s(M)=(1-\Delta_g)^{\frac{s}{2}}L^2(M)$, equipped with the norm
\[
  \|f\|_{H^s(M)}^2 = \sum_{(m,n)\in\Z\times\N_0} \langle \lambda_{m,n}\rangle^{s}
  \|\Pi_n\Theta_m f\|_{L^2(M)}^2
  \sim \sum_{N \geq 1} N^{2s}\|P_N f\|_{L^2(M)}^2.
\]

\section{The tri-linear estimate for free solutions}\label{sect:multlin_est}

In this section we are going to prove a new tri-linear Strichartz estimate for free solutions
(Proposition~\ref{prop:trilin_est}). This proposition is an improvement of the tri-linear estimate
\cite[Proposition~5.1]{BGT05b} of Burq--G\'erard--Tzvetkov in the sense that it is critical.

\medskip

We start this section collecting two known results, which we will rely on later.
The following estimate on exponential sums is due to Bourgain \cite{B93a} and was used to prove
Strichartz estimates on the flat torus.
\begin{lemma}[cp.\ {\cite[Formula~(3.116)]{B93a}}]\label{lem:exp_sum_est}
  Let $p>4$, then for all $N\geq1$, $a\in\ell^2(\Z^2)$, $z\in\Z^2$ and $\sS_N\subseteq z + \{-N,\ldots,N\}^2$
  it holds that
\[
   \biggl\|\sum_{n\in\sS_N} e^{-i|n|^2t} e^{in\cdot x} a_n\biggr\|_{L^p_{t,x}([0,2\pi]^3)} \ls N^{1-\frac4p} \|a\|_{\ell^2}.
\]
\end{lemma}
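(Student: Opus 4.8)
The plan is to read the left-hand side as the free periodic Schr\"odinger evolution $F(t,x)=\sum_{n\in\sS_N}a_n e^{i(n\cdot x+|n|^2t)}$ of the frequency-localised datum $\sum_n a_n e^{in\cdot x}$, and to prove the associated periodic Strichartz estimate in the range strictly above the Strichartz-critical exponent $p=4$. First I would normalise the position of the box: writing $b_k=a_{k+z}$ for $k\in\{-N,\dots,N\}^2$ one has $F(t,x)=e^{i(z\cdot x+|z|^2t)}G(t,x+2zt)$ with $G(t,y)=\sum_k b_k e^{i(k\cdot y+|k|^2t)}$ the centred sum. Since $2z\in\Z^2$, the shear $(t,x)\mapsto(t,x+2zt)$ is a measure-preserving bijection of $[0,2\pi]^3$ and the unimodular prefactor is harmless, so $\|F\|_{L^p}=\|G\|_{L^p}$ and $\|b\|_{\ell^2}=\|a\|_{\ell^2}$; hence I may assume $z=0$ and $\sS_N\subseteq\{-N,\dots,N\}^2$.

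Next I would assemble the two elementary ingredients. The first is the $L^4$ count: expanding $\|F\|_{L^4}^4=\|F^2\|_{L^2}^2$ and applying Plancherel in $(t,x)$ on $[0,2\pi]^3$ gives
\[
\|F\|_{L^4}^4\ls\sum_{E\in\Z,\,s\in\Z^2}\Big|\sum_{\substack{n_1+n_2=s\\|n_1|^2+|n_2|^2=E}}a_{n_1}a_{n_2}\Big|^2\le\Big(\sup_{E,s}r(E,s)\Big)\,\|a\|_{\ell^2}^4,
\]
where $r(E,s)$ counts pairs $(n_1,n_2)\in\sS_N^2$ with prescribed sum $s$ and energy $E$ and the last step is a per-cell Cauchy--Schwarz. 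Such a pair is fixed by $d=n_1-n_2$ subject to $|d|^2=2E-|s|^2$, so $r(E,s)$ is bounded by the number of representations of an integer $\ls N^2$ as a sum of two squares; the classical divisor bound yields $r(E,s)\ls_\delta N^\delta$ for every $\delta>0$, whence $\|F\|_{L^4}\ls_\delta N^{\delta}\|a\|_{\ell^2}$. This is exactly the $\eps$-lossy endpoint, the loss being the number-of-divisors phenomenon. The second ingredient is the trivial bound $\|F\|_{L^\infty}\le\sum_{n\in\sS_N}|a_n|\le|\sS_N|^{1/2}\|a\|_{\ell^2}\ls N\|a\|_{\ell^2}$.

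The main obstacle is that these two bounds interpolate only to $\|F\|_{L^p}\ls_\delta N^{1-\frac4p+\delta}\|a\|_{\ell^2}$: the divisor loss survives any soft interpolation, and even a distributional refinement based on the sharp $L^4$ count leaves a residual logarithm, so reaching the \emph{clean} exponent for $p>4$ is genuinely the heart of the matter and is where the strict inequality $p>4$ is used. To remove the loss I would pass to the arithmetic of the system itself. One route is to establish at $p_0=6$ the clean mean-value bound $\|F\|_{L^{6}}^{6}\ls N^{2}\|a\|_{\ell^2}^{6}$, i.e.\ that the number of solutions of the quadratic Vinogradov-type system $\sum_i n_i=\sum_i m_i,\ \sum_i|n_i|^2=\sum_i|m_i|^2$ with three lattice points on each side matches the diagonal heuristic $N^{2}$ with \emph{no} logarithmic correction; this non-degeneracy for three or more summands, in contrast to the borderline two-summand case that produced the divisor loss, is what makes the estimate critical rather than $\eps$-critical. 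Interpolating such a clean anchor against the $L^\infty$ bound then yields the assertion for all $p\ge 6$, while the near-critical range $4<p<6$ and the passage from indicator coefficients to general $a\in\ell^2$ (where a naive dyadic splitting into level sets reintroduces a logarithm) are the delicate points. I expect this clean count for the quadratic system, which is precisely Bourgain's input in \cite[(3.116)]{B93a}, to be the only nontrivial step.
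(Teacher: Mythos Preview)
Your Galilean normalisation to $z=0$ is exactly the reduction the paper performs, and the paper stops there: it simply cites Bourgain's \cite[(3.116)]{B93a}, which \emph{is} the $L^p$ Strichartz estimate on $\T^2$ for $p>4$, as a black box. So as far as matching the paper goes, your first paragraph suffices.

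Where you go further and try to sketch the proof of Bourgain's estimate, there is a genuine gap. Your proposed architecture --- a clean $L^6$ mean-value count for the $3$-vs-$3$ quadratic Vinogradov system, interpolated against $L^\infty$ --- is not how Bourgain obtains (3.116), and it does not close on its own. First, you do not actually establish the clean $L^6$ bound: the claim that the number of solutions of $n_1+n_2+n_3=m_1+m_2+m_3$, $|n_1|^2+|n_2|^2+|n_3|^2=|m_1|^2+|m_2|^2+|m_3|^2$ with $n_i,m_i\in\{-N,\dots,N\}^2$ is $\ls N^8$ (equivalently $\|F\|_{L^6}^6\ls N^2\|a\|_{\ell^2}^6$) with \emph{no} logarithm is not elementary, and you give no argument for it. Second, as you yourself note, even granting a clean $L^6$ anchor, interpolation with $L^\infty$ only reaches $p\geq 6$; the range $4<p<6$ is left unaddressed, and interpolating against your $\eps$-lossy $L^4$ bound reintroduces the loss. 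So the sketch, as it stands, does not prove the lemma for any $p>4$.

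Bourgain's actual argument is different in kind: it is a $TT^\ast$ reduction to pointwise control of the kernel $K_N(t,x)=\sum_{|n|\leq N}e^{i(n\cdot x-|n|^2t)}$, and the kernel is bounded via a Hardy--Littlewood major/minor arc decomposition of the time variable together with Gauss/Weyl sum estimates; the strict inequality $p>4$ enters when summing the resulting weak-type bound over arcs. If you want a self-contained proof rather than a citation, that is the route to indicate.
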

\begin{proof}[Proof/Reference]
  The desired estimate follows immediately from the Galilean transformation
\[
   x\cdot(n-z) - t|n-z|^2 = (x+2tz)\cdot n - t|n|^2 - x\cdot z - t|z|^2,
\]
  as applied in \cite[formulas (5.7)--(5.8)]{B93a} and \cite[Proposition~3.1]{HTT11},
  and from \cite[formula~(3.116)]{B93a}.
\end{proof}

We will also use the succeeding tri-linear spectral cluster estimate of Burq--G\'erard--Tzvetkov, which is more
generally valid for any compact smooth Riemannian manifold without boundary of dimension two.
\begin{lemma}[{\cite[Theorem~3]{BGT05b}}]\label{lem:trilin_spect_clust_est}
  For all integers $n_1\geq n_2\geq n_2\geq 0$ and $f_1,f_2,f_3\in L^2(\S^2)$ the following
  tri-linear estimate holds true
\[
   \|\Pi_{n_1}f_1\Pi_{n_2}f_2\Pi_{n_3}f_3\|_{L^2(\S^2)}
   \lesssim \bigl(\langle n_2\rangle\langle n_3\rangle\bigr)^{\frac14}\prod_{j=1}^3\|\Pi_{n_j}f_j\|_{L^2(\S^2)}.
\]
\end{lemma}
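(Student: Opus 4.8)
Since the right-hand side is independent of the largest index $n_1$, the guiding principle is that $\Pi_{n_1}f_1$ may only ever be used through a \emph{bilinear} interaction. Sogge's linear spectral cluster estimate $\|\Pi_n f\|_{L^p(\S^2)}\ls\langle n\rangle^{\sigma(p)}\|f\|_{L^2}$ (with $\sigma(4)=\tfrac18$, $\sigma(6)=\tfrac16$) always produces a positive power of the frequency to which it is applied, so using it on the top mode would leave an inadmissible factor $\langle n_1\rangle^{\sigma}$; in fact any two-factor H\"older split overshoots, producing at best $\langle n_2\rangle^{1/4}\langle n_3\rangle^{1/2}$. The tool that removes the top frequency is the bilinear eigenfunction estimate of Burq--G\'erard--Tzvetkov, valid on any compact surface: for $n\geq m$,
\[
   \|\Pi_n f\,\Pi_m h\|_{L^2(\S^2)}\ls\langle m\rangle^{\frac14}\|f\|_{L^2}\|h\|_{L^2},
\]
whose defining feature is precisely that only the \emph{smaller} index appears on the right.

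The plan is to peel off the factors so that the two surviving powers are exactly $\langle n_2\rangle^{1/4}$ and $\langle n_3\rangle^{1/4}$. First I would apply the bilinear estimate to the pair $(\Pi_{n_1}f_1,\Pi_{n_2}f_2)$, which kills the $n_1$-dependence and yields the factor $\langle n_2\rangle^{1/4}$; more generally its spectrally localised version (obtained from the eigenfunction estimate by decomposing into eigenspaces) controls each dyadic piece $\chi_N(\Pi_{n_1}f_1\Pi_{n_2}f_2)$, where $\chi_N$ projects onto eigenvalues of size $\sim N^2$. To insert the third factor at the admissible cost $\langle n_3\rangle^{1/4}$, I would again invoke the bilinear estimate, now pairing each band $\chi_N(\cdots)$ against $\Pi_{n_3}f_3$: for $N\gtrsim n_3$ this contributes $\langle n_3\rangle^{1/4}$, and for the low bands the even smaller power $N^{1/4}\leq\langle n_3\rangle^{1/4}$. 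The remaining issue is to sum the pieces so as to reassemble $\|\Pi_{n_1}f_1\Pi_{n_2}f_2\|_{L^2}$, hence the factor $\langle n_2\rangle^{1/4}$, without loss.

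The main point is to carry out this reassembly without a logarithmic loss in $n_1$, since a naive triangle inequality over the $O(\log n_1)$ dyadic bands would cost a factor $(\log n_1)^{1/2}$ after Cauchy--Schwarz. I would split the bands at $N\sim n_3$. For the high bands $N\gtrsim n_3$ the outputs $\chi_N(\Pi_{n_1}f_1\Pi_{n_2}f_2)\cdot\Pi_{n_3}f_3$ are supported in essentially disjoint eigenvalue windows of width $\sim n_3$ around $N$; on $\S^2$ this is exact, since the pointwise product of a degree-$a$ and a degree-$b$ harmonic lives in degrees $|a-b|,\dots,a+b$ by the Clebsch--Gordan rule. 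Their $L^2$-masses may therefore be added in $\ell^2$ rather than $\ell^1$, reassembling $\|\Pi_{n_1}f_1\Pi_{n_2}f_2\|_{L^2}$ with no loss. For the $O(\log n_3)$ low bands $N<n_3$, whose outputs instead cluster near degree $n_3$ and do overlap, the bilinear estimate supplies the smaller power $N^{1/4}$, and Cauchy--Schwarz combined with the geometric summation $\sum_{N<n_3}N^{1/2}\sim n_3^{1/2}$ again yields exactly $\langle n_3\rangle^{1/4}\|\Pi_{n_1}f_1\Pi_{n_2}f_2\|_{L^2}$, with no logarithm. Combining the two regimes with the bilinear bound $\|\Pi_{n_1}f_1\Pi_{n_2}f_2\|_{L^2}\ls\langle n_2\rangle^{1/4}\|f_1\|_{L^2}\|f_2\|_{L^2}$ closes the estimate. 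The delicate step — the one genuinely using two-dimensional geometry rather than soft H\"older or interpolation, which I expect to fail since it cannot avoid an $\langle n_1\rangle$ factor — is the output almost-orthogonality in the high-band regime.
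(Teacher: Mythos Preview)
The paper does not prove this lemma at all: it is quoted verbatim from \cite[Theorem~3]{BGT05b} and used as a black box. So there is no ``paper's own proof'' to compare against; the relevant benchmark is the original argument of Burq--G\'erard--Tzvetkov, which proceeds by a direct trilinear oscillatory-integral analysis built on the parametrix for $\chi(\sqrt{-\Delta}-\lambda)$, \emph{not} by a reduction to the bilinear estimate.

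Your reduction has a genuine gap at the step where you ``invoke the bilinear estimate, now pairing each band $\chi_N(\Pi_{n_1}f_1\Pi_{n_2}f_2)$ against $\Pi_{n_3}f_3$'' to obtain the factor $\langle n_3\rangle^{1/4}$. The bilinear bound $\|\Pi_n f\,\Pi_m h\|_{L^2}\lesssim\langle m\rangle^{1/4}\|f\|_{L^2}\|h\|_{L^2}$ is stated for \emph{single} eigenspaces (equivalently, unit-width clusters), and it does \emph{not} extend to a dyadic band $\chi_N$ in place of $\Pi_n$. If you try to derive such an extension by writing $\chi_N F=\sum_{k\sim N}\Pi_k F$ and reassembling, the outputs $\Pi_k F\cdot\Pi_{n_3}f_3$ overlap across $\sim n_3$ consecutive values of $k$ at each output degree, and Cauchy--Schwarz costs exactly $n_3^{1/2}$; you end up with $\langle n_3\rangle^{3/4}$ instead of $\langle n_3\rangle^{1/4}$. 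Worse, the dyadic-band bilinear statement is simply false: take $h=Y_m^0$ and $F=\sum_{k=N}^{2N-1}Y_k^0$ with $N\gg m$. Both are zonal and concentrate at the pole; a short computation gives $\|F\,Y_m^0\|_{L^2}\gtrsim m^{1/2}N^{1/2}$ while $m^{1/4}\|F\|_{L^2}\|h\|_{L^2}\sim m^{1/4}N^{1/2}$. Thus the inequality you need fails by $m^{1/4}$, and your almost-orthogonality across dyadic $N$ (which is fine) cannot repair this, since the loss occurs already inside a single band. The extra structure that $\chi_N F$ arises as $\chi_N(\Pi_{n_1}f_1\Pi_{n_2}f_2)$ does not obviously help, and you give no argument exploiting it.

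In short, the trilinear estimate is strictly stronger than what two applications of the bilinear estimate plus Clebsch--Gordan orthogonality can deliver; the missing $\langle n_3\rangle^{1/2}$ is precisely what forces a genuinely trilinear analysis in \cite{BGT05b}.
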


Throughout this paper, let $\tau_0=[0,8\pi]$ be the considered time interval.
For the purpose of proving Proposition~\ref{prop:trilin_est},
we will use following exponential sum estimate.
The main idea is to reduce the estimate to Lemma~\ref{lem:exp_sum_est}.
\begin{lemma}\label{lem:lin_est}
  Let $p>4$. Then, for all $N\geq1$, $a\in\ell^2(\Z^2)$, $z\in\Z^2$ and
  $\sS_N\subseteq z+\{-N,\ldots,N\}^2$ the estimate
\[
   \biggl\|\sum_{(m,n)\in \sS_N} e^{-i\lambda_{m,n}t}e^{im\theta} a_{m,n}\biggr\|_{L_{t,\theta}^p(\tau_0\times\S)}
   \ls N^{1-\frac3p} \|a\|_{\ell^2}
\]
holds true.
\end{lemma}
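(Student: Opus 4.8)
The plan is to reduce this one-dimensional-in-$\theta$, frequency-localized exponential sum estimate on $\S\times\S^2$ to the genuinely two-dimensional estimate of Lemma~\ref{lem:exp_sum_est} by viewing the variable $n$ as a second spatial frequency. The obstacle to doing this directly is that the dispersion relation on $\S\times\S^2$ is $\lambda_{m,n}=m^2+n^2+n$, which is \emph{not} quite a quadratic form in $(m,n)$: the linear term $n$ spoils the clean Galilean structure that Lemma~\ref{lem:exp_sum_est} exploits. The key observation is that $n^2+n=(n+\tfrac12)^2-\tfrac14$, so after completing the square the phase becomes $m^2+(n+\tfrac12)^2-\tfrac14$. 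The shift by $\tfrac12$ is not an integer, but since $n$ runs over a set of the form $z+\{-N,\dots,N\}^2$ (well, its second coordinate), I can absorb the half-integer shift as a fixed modulation.

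Concretely, first I would introduce an auxiliary spatial variable. The sum in Lemma~\ref{lem:lin_est} only involves $e^{im\theta}$ and has no genuine dependence on a second space variable, so the trick is to \emph{manufacture} one. I would write
\[
  e^{-i\lambda_{m,n}t}e^{im\theta}
  = e^{it/4}\,e^{-i(m^2+(n+1/2)^2)t}\,e^{im\theta},
\]
and then compare $L^p_{t,\theta}(\tau_0\times\S)$, where there is no $y$-variable, to an $L^p$ norm over an additional torus variable $y\in[0,2\pi]$. To do this I would multiply by $e^{iny}$ (for a dummy variable $y$) and use that the $L^p_{t,\theta,y}$-norm over $(t,\theta,y)\in\tau_0\times[0,2\pi]^2$ of the resulting full two-dimensional exponential sum controls, after integrating out $y$ by Minkowski/Fubini, the one-dimensional norm we want — or conversely that they agree up to the trivial factor coming from $|e^{iny}|=1$. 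The cleanest route is to observe that adding the phantom factor $e^{iny}$ does not change the modulus of the sum pointwise in a way that helps, so instead I would directly apply Lemma~\ref{lem:exp_sum_est} with the pair $(m,n)\in\Z^2$ as its two-dimensional frequency, the phase $m^2+(n+1/2)^2$ playing the role of $|\cdot|^2$ after the half-integer shift is handled, and the spatial variable $(\theta,y)$.

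The half-integer shift is where care is needed. I would set $n'=n$ and treat the phase $(n+1/2)^2=n^2+n+1/4$: writing $e^{-i(n+1/2)^2 t}=e^{-i/4\,t}e^{-i(n^2+n)t}$ shows the fractional part only contributes the $t$-independent-in-$n$ modulation $e^{-it/4}$, which has modulus one and factors out of the norm. Thus the genuine two-dimensional quadratic phase $m^2+n^2$ is recovered up to the additive linear-in-$n$ term, and after the Galilean completion-of-the-square already recorded in Lemma~\ref{lem:exp_sum_est} (whose statement already permits an arbitrary center $z$ and set $\sS_N\subseteq z+\{-N,\dots,N\}^2$), the hypotheses match ours exactly. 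Applying Lemma~\ref{lem:exp_sum_est} on the three-dimensional domain $[0,2\pi]^3$ — here identified with $t\in\tau_0$ (of length $8\pi$, hence a bounded number of translated copies of $[0,2\pi]$, costing only a constant), $\theta\in\S\cong[0,2\pi]$, and the phantom $y\in[0,2\pi]$ — yields the bound $N^{1-4/p}\|a\|_{\ell^2}$. The final step is to recover the $\theta$-only norm from the $(\theta,y)$ norm: since the summand is independent of $y$, the $L^p_y([0,2\pi])$ norm is just a constant multiple of the pointwise value, so $\|F\|_{L^p_{t,\theta}}\sim\|F\|_{L^p_{t,\theta,y}}$, and the exponent improves from $N^{1-4/p}$ to $N^{1-3/p}$ precisely because one power of $N$ worth of summation (the $n$-direction, of length $\sim N$) is being estimated in $\ell^2$ rather than integrated, and Lemma~\ref{lem:exp_sum_est} is stated for genuinely two-dimensional frequencies $\sS_N$. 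I expect the main obstacle to be bookkeeping this dimensional reduction correctly — specifically, verifying that the loss from the two-dimensional estimate $N^{1-4/p}$ upgrades to the claimed $N^{1-3/p}$ when only one of the two frequency variables carries oscillation in a physical variable — rather than any deep analytic difficulty, since the exponential-sum input is already in hand.
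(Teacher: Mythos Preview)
Your overall strategy --- introduce an auxiliary spatial variable so that the sum matches the two-dimensional structure required by Lemma~\ref{lem:exp_sum_est}, handle the linear term in $\lambda_{m,n}$ by completing the square, then invoke Bourgain's estimate --- is exactly the paper's approach. The paper implements the completion of the square via the rescaling $4\widetilde t=t$, $2\widetilde\theta=\theta$ (using $4\lambda_{m,n}=(2m)^2+(2n+1)^2-1$), but your version with the harmless modulation $e^{-it/4}$ is equivalent.

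However, your final step contains a genuine gap. You cannot simultaneously (i) insert the factor $e^{iny}$ so that Lemma~\ref{lem:exp_sum_est} applies on $(t,\theta,y)\in[0,2\pi]^3$, and (ii) claim that ``the summand is independent of $y$''. Once you insert $e^{iny}$, the function
\[
F_y(t,\theta,y)=\sum_{(m,n)\in\sS_N} e^{-i\lambda_{m,n}t}e^{im\theta}e^{iny}a_{m,n}
\]
genuinely depends on $y$, and Lemma~\ref{lem:exp_sum_est} yields only $\|F_y\|_{L^p_{t,\theta,y}}\ls N^{1-4/p}\|a\|_{\ell^2}$. The sum you actually want is $F(t,\theta)=F_y(t,\theta,0)$, so you must pass from a \emph{pointwise} evaluation in $y$ to an $L^p_y$ norm, which is the wrong direction for a free inequality. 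If instead you do not insert $e^{iny}$, the sum is constant in $y$ and the norms agree, but then it is not of the form $\sum e^{-i|n|^2t}e^{in\cdot x}a_n$ with a genuinely two-dimensional $x$, so Lemma~\ref{lem:exp_sum_est} does not apply. Your heuristic that the exponent ``upgrades'' from $1-4/p$ to $1-3/p$ because the $n$-direction is ``estimated in $\ell^2$ rather than integrated'' does not correspond to any valid step.

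The missing ingredient is a Bernstein/Sobolev inequality in the auxiliary variable: since the $y$-frequencies of $F_y$ lie in an interval of length $\sim N$, one has $\|F_y(t,\theta,\cdot)\|_{L^\infty_y}\ls N^{1/p}\|F_y(t,\theta,\cdot)\|_{L^p_y}$, and hence
\[
\|F\|_{L^p_{t,\theta}}\leq \|F_y\|_{L^p_{t,\theta}L^\infty_y}\ls N^{1/p}\|F_y\|_{L^p_{t,\theta,y}}\ls N^{1/p}\cdot N^{1-4/p}\|a\|_{\ell^2}=N^{1-3/p}\|a\|_{\ell^2}.
\]
This Sobolev step is precisely what the paper uses, and it is the true source of the exponent $1-3/p$.
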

\begin{proof}
  We first show that we may replace $\lambda_{m,n}$ by $m^2+n^2$.
  We set $4\widetilde t = t$ and $2\widetilde\theta=\theta$.
  Since $4\lambda_{m,n}=(2m)^2+(2n+1)^2-1$, the left hand side is
  bounded by a constant times
\[
   \biggl\|\sum_{(\widetilde m,\widetilde n)\in\widetilde{\sS_N}} e^{-i(\widetilde m^2+\widetilde n^2)\widetilde t}
      e^{i\widetilde m\widetilde\theta} \widetilde a_{\widetilde m,\widetilde n}\biggr\|_{L^p_{\widetilde t,\widetilde\theta}([0,2\pi]^2)},
\]
  where $\widetilde{\sS_N} := \{(\widetilde m,\widetilde n)\in\Z^2 : (\widetilde m/2,(\widetilde n-1)/2)\in\sS_N\}$
  is inside a cube of side length $4N$, and
\[
   \widetilde a_{\widetilde m,\widetilde n} :=
   \begin{cases}
     a_{\widetilde m/2,(\widetilde n-1)/2}, & \widetilde m\in2\Z,\; \widetilde n\in2\Z+1,\\
     0, & \text{otherwise.}
   \end{cases}
\]
  Hence, it suffices to prove
\[
   \biggl\|\sum_{(m,n)\in\sS_N} e^{-i(m^2+n^2)t} e^{im\theta} a_{m,n}\biggr\|_{L^p_{t,\theta}([0,2\pi]^2)}
   \lesssim  N^{1-\frac3p} \|a\|_{\ell^2}.
\]
  In order to apply the exponential sum estimate of Lemma~\ref{lem:exp_sum_est}, we introduce another
  variable $\nu$. Obviously, the left hand side is bounded by
\[
   \sup_{\nu\in[0,2\pi]}\biggl\|\sum_{(m,n)\in\sS_N} e^{-i(m^2+n^2)t} e^{im\theta}e^{in\nu} a_{m,n}\biggr\|_{L^p_{t,\theta}([0,2\pi]^2)},
\]
  which can be further estimated by
\[
   \biggl\|\sum_{(m,n)\in\sS_N} e^{-i(m^2+n^2)t} e^{im\theta}e^{in\nu} a_{m,n}\biggr\|_{L^p_{t,\theta}([0,2\pi]^2,L^\infty_\nu([0,2\pi]))}
\]
  using Minkowski's inequality. Sobolev's embedding in $\nu$ allows to bound this by a constant times
\[
   N^{\frac1p}\biggl\|\sum_{(m,n)\in\sS_N} e^{-i(m^2+n^2)t} e^{im\theta}e^{in\nu} a_{m,n}\biggr\|_{L^p_{t,\theta,\nu}([0,2\pi]^3)}.
\]
  Finally, Lemma~\ref{lem:exp_sum_est} implies the desired result.
\end{proof}

\begin{remark}
  One can even lower the exponent w.r.t.\ $\S$ to $4$, if the exponent w.r.t.\ time
  is raised to $p>\frac{16}{3}$: Let $p>\frac{16}{3}$, then, under the same assumptions on
  $a$, $N$, $\sS_N$ as in  Lemma~\ref{lem:lin_est}, the following estimate holds true:
\begin{equation}\label{eq:esp_sum_est_tori_method}
   \biggl\|\sum_{(m,n)\in \sS_N} e^{-i\lambda_{m,n}t}e^{im\theta} a_{m,n}\biggr\|_{L_t^p(\tau_0,L_\theta^4(\S))}
   \lesssim  N^{\frac34-\frac2p} \|a\|_{\ell^2}.
\end{equation}
  The proof is very similar to Bourgain's proof of Strichartz estimates on 
  irrational tori \cite[Proposition~1.1]{B07}.
  However, it seems that this estimate is not appropriate for studying local existence:
  We start with a tri-linear $L^2(\tau_0\times M)$ estimate and proceed as in the proof of
  Proposition~\ref{prop:trilin_est} until \eqref{eq:trilin_first_step}.
  Then, using H\"older's inequality to put the two functions with the highest
  frequencies to $L^{\frac{16}{3}+}_tL^4_\theta$ and thus the function with the lowest frequency, say $N_3$,
  to $L^{8-}_tL^{\infty}_\theta$. We treat the latter term as follows:
  Applying Sobolev's embedding to bound it by the $L^{8-}_tL^{4}_\theta$-norm gives a factor $N_3^{\frac14}$.
  The exponential sum estimate \eqref{eq:esp_sum_est_tori_method} gives
  $N_3^{\frac12-}$ and from the spectral cluster estimate we get another $N_3^{\frac14}$ as in
  \eqref{eq:trilin_first_step}.
  All in all we obtain $N_3^{1-}$, and hence the power on the lowest frequency is too low
  to conclude local well-posedness. 
\end{remark}

  The subsequent estimate will serve as an $L^\infty(\tau_0\times \S)$ estimate. It improves the  previous
  lemma, because it takes additional smallness properties of the underlying point set $\sS_{N,M}$ into account, which
  will be induced by almost orthogonality in time. 
\begin{lemma}\label{lem:lin_infty_est}
  Let $a\in\ell^2(\Z^2)$, $N\geq M\geq 1$, and
\[
  \sS_{N,M}\subseteq \bigl\{ (m,n)\in z+\{0,\ldots,N\}^2 : \sqrt{\lambda_{m,n}} \in [b,b+M] \bigr\}
\]
  for some $z\in\Z^2$ and $b\in\N_0$. Then we have
\[
  \sum_{(m,n)\in \sS_{N,M}}|a_{m,n}| \lesssim M^{\frac12} N^{\frac12} \|a\|_{\ell^2}.
\]
\end{lemma}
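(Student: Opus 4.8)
The plan is to first strip off the coefficients $a_{m,n}$ by Cauchy--Schwarz, which reduces the whole statement to a purely geometric lattice-point count. Indeed,
\[
   \sum_{(m,n)\in\sS_{N,M}}|a_{m,n}| \le |\sS_{N,M}|^{\frac12}\,\|a\|_{\ell^2},
\]
so it suffices to prove the cardinality bound $|\sS_{N,M}|\lesssim MN$. The advantage of this reduction is that the count can be obtained from the area and perimeter of the underlying region; this is exactly what lets us avoid the number-of-divisors estimate that produced the $\eps$-loss in \cite[Lemma~4.2]{BGT05b}.

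Next I would recast the spectral constraint geometrically. Completing the square gives $\lambda_{m,n}=m^2+(n+\tfrac12)^2-\tfrac14$, so the level sets of $\lambda_{m,n}$ are circles centered at $(0,-\tfrac12)$, and the condition $\sqrt{\lambda_{m,n}}\in[b,b+M]$ is equivalent to $r_-\le\bigl(m^2+(n+\tfrac12)^2\bigr)^{\frac12}\le r_+$ with $r_-=\sqrt{b^2+\tfrac14}$ and $r_+=\sqrt{(b+M)^2+\tfrac14}$. Using $r_+^2-r_-^2=M(2b+M)$ together with $r_++r_-\ge(b+M)+b=2b+M$, the radial width satisfies $r_+-r_-\le M$. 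Hence, writing $A$ for this annulus and $Q$ for the square of side length $N$ containing $z+\{0,\ldots,N\}^2$, I obtain $\sS_{N,M}\subseteq A\cap Q$ after the harmless translation $n\mapsto n+\tfrac12$ of the lattice.

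Finally I would estimate the number of (translated) lattice points in $A\cap Q$ by its area and perimeter. Slicing the annulus in the radial variable,
\[
   \operatorname{Area}(A\cap Q)\le (r_+-r_-)\sup_{r}\operatorname{length}\bigl(\{|x|=r\}\cap Q\bigr)\lesssim M\cdot N,
\]
where the supremum is $\lesssim N$ because a circle crosses each of the four sides of $Q$ at most twice, so the part of the circle inside $Q$ consists of at most four arcs, each of length comparable to its chord and hence $\lesssim N$ (and the entire circle has length $\lesssim N$ once its radius is $\lesssim N$). The boundary of $A\cap Q$ lies on these two circular arcs together with $\partial Q$, so its total length is $\lesssim N$ as well. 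The standard comparison of a lattice-point count with area plus perimeter (assign to each counted point the unit square centered at it; the squares not contained in $A\cap Q$ must meet the boundary, and a curve of length $\lesssim N$ meets $\lesssim N$ unit squares) then yields $|\sS_{N,M}|\le\operatorname{Area}(A\cap Q)+O(\operatorname{perimeter}+1)\lesssim MN$, using $M\ge1$. Combined with the Cauchy--Schwarz step this gives the claim.

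I expect the main obstacle to be the geometric arc-length bound and, implicitly, the near-tangency configuration in which the annulus meets $Q$ almost parallel to a coordinate axis: there a naive column-by-column count of admissible $n$ for each fixed $m$ overestimates the cardinality unless one also exploits that $n$ is confined to the box $Q$, whereas the area/perimeter argument treats all configurations uniformly. Thus the one step that must be carried out with some care is the claim that the portion of a circle inside a square of side $N$ has length $\lesssim N$, uniformly in the radius.
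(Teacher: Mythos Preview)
Your proposal is correct and follows essentially the same route as the paper: Cauchy--Schwarz to reduce to $\#\sS_{N,M}\lesssim MN$, then a geometric lattice-point count for an annulus intersected with a square of side $\sim N$, using that the arc inside the square has length $\lesssim N$. The only cosmetic differences are that the paper passes to $m^2+n^2$ by doubling rather than by shifting the center to $(0,-\tfrac12)$, and it absorbs the boundary contribution by enlarging the annulus by $1/\sqrt{2}$ and computing the area of the resulting annular sector directly, whereas you keep area and perimeter separate; both executions rest on the same arc-length bound you single out at the end.
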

\begin{proof}
  By Cauchy-Schwarz, we only have to show $\#\sS_{N,M}\ls MN$.
  Since
\[
   \#\sS_{N,M}
   \leq \#\bigl\{ (m,n)\in \widetilde z+\{0,\ldots,2N\}^2 : \sqrt{m^2+n^2} \in [2b,2b+4M] \bigr\},
\]
  where $\widetilde z=2z+(0,1)$, we may assume $\lambda_{m,n}=m^2+n^2$.
  The rest of the proof is motivated by \cite[Section~2.7]{G85}.
  Consider all the lattice points in $\sS_{N,M}$ as centers of unit squares with sides parallel to the coordinate
  axes. Obviously, the number of lattice points in $\sS_{N,M}$ equals the area of the union of these squares.
  The diagonal of the unit squares is $\sqrt2$. Consequently, the union of the squares is inside
  a $\frac{1}{\sqrt2}$-neighborhood of $\sS_{N,M}$. This neighborhood can be covered by an annulus of angle
  $\alpha$, outer radius $R:=2b+5M$ and inner radius $r:=\max\{R-6M,0\}$, where $\alpha\in[0,2\pi]$ is determined
  as follows: Since the point set is located in a cube of size $N$, the arc length of the annulus sector is
  bounded by $\sim N$. Thus $\alpha\sim\frac{N}{R}$, and we deduce that the area is bounded by
\[
    \frac{\alpha}{2}(R^2-r^2) \ls \frac{N}{R}MR \ls MN.
\]
\end{proof}

  Interpolating Lemma~\ref{lem:lin_est} and Lemma~\ref{lem:lin_infty_est}, we obtain an
  $L^p(\tau_0\times\S)$ estimate for $p>4$ that takes additional smallness
  properties of the underlying point set into account as Lemma~\ref{lem:lin_infty_est} does.
\begin{coro}\label{coro:lin_est2}
  Let $p>4$. Then, for all $\varepsilon>0$, $\sS_{N,M}$ as in Lemma~\ref{lem:lin_infty_est},
  $N\geq M\geq 1$ and $a\in\ell^2(\Z^2)$ we have that
\[
  \biggl\| \sum_{(m,n)\in\sS_{N,M}} e^{-i\lambda_{m,n}t}e^{im\theta} a_{m,n}
  \biggr\|_{L^p_{t,\theta}(\tau_0\times\S)}
  \lesssim \Bigl(\frac{N}{M}\Bigr)^\varepsilon N^{\frac12-\frac1p} M^{\frac12-\frac2p} \|a\|_{\ell^2}.
\]
\end{coro}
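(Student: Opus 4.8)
The plan is to interpolate the two preceding estimates directly at the level of the single function
\[
   F(t,\theta) = \sum_{(m,n)\in\sS_{N,M}} e^{-i\lambda_{m,n}t}e^{im\theta} a_{m,n},
\]
rather than via operator interpolation. First I would recast Lemma~\ref{lem:lin_infty_est} as an $L^\infty$-bound: by the triangle inequality one has pointwise $|F(t,\theta)| \leq \sum_{(m,n)\in\sS_{N,M}}|a_{m,n}| \lesssim M^{\frac12}N^{\frac12}\|a\|_{\ell^2}$, hence $\|F\|_{L^\infty_{t,\theta}(\tau_0\times\S)}\lesssim M^{\frac12}N^{\frac12}\|a\|_{\ell^2}$. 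On the other hand, since the index set satisfies $\sS_{N,M}\subseteq z+\{0,\ldots,N\}^2\subseteq z+\{-N,\ldots,N\}^2$, the hypotheses of Lemma~\ref{lem:lin_est} are met, so for every fixed exponent $q>4$ we obtain $\|F\|_{L^q_{t,\theta}(\tau_0\times\S)}\lesssim N^{1-\frac3q}\|a\|_{\ell^2}$.

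Next I would fix some $q$ with $4<q<p$, which is possible because $p>4$, and apply the elementary $L^p$-interpolation inequality $\|F\|_{L^p}\leq \|F\|_{L^q}^{q/p}\,\|F\|_{L^\infty}^{1-q/p}$ (valid since $q\leq p\leq\infty$, and following at once from $|F|^p=|F|^q|F|^{p-q}\leq |F|^q\|F\|_{L^\infty}^{p-q}$ after integrating and taking the $p$-th root). Inserting the two bounds from the first paragraph gives
\[
   \|F\|_{L^p_{t,\theta}(\tau_0\times\S)}\lesssim \bigl(N^{1-\frac3q}\bigr)^{q/p}\bigl(M^{\frac12}N^{\frac12}\bigr)^{1-q/p}\|a\|_{\ell^2}.
\]
A short computation of the exponents—writing $\eps':=\frac{q-4}{2p}$, so that $\frac{q}{2p}=\frac2p+\eps'$—shows that the power of $N$ is $\frac{q}{2p}-\frac3p+\frac12=\frac12-\frac1p+\eps'$ and the power of $M$ is $\frac12-\frac{q}{2p}=\frac12-\frac2p-\eps'$.

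Finally I would rewrite the right-hand side as $(N/M)^{\eps'}\,N^{\frac12-\frac1p}M^{\frac12-\frac2p}\|a\|_{\ell^2}$ and invoke the hypothesis $N\geq M$. Given any prescribed $\eps>0$, choosing $q\in(4,p)$ close enough to $4$ makes $\eps'=\frac{q-4}{2p}\leq\eps$; since $N/M\geq 1$ this yields $(N/M)^{\eps'}\leq(N/M)^{\eps}$, which is precisely the claimed bound. The argument involves no genuine obstacle: the only points requiring care are the verification that $\sS_{N,M}$ fits the cube hypothesis of Lemma~\ref{lem:lin_est}, the bookkeeping of the interpolation exponents, and the observation that the $\eps$-loss is harmless exactly because $N\geq M$ forces $N/M\geq 1$, so the surplus power $\eps'$ can be absorbed (rather than amplified) as $q\downarrow 4$.
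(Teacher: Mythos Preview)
Your proof is correct and essentially identical to the paper's: both interpolate the $L^q$ bound from Lemma~\ref{lem:lin_est} (with $q$ slightly above $4$) against the $L^\infty$ bound obtained from Lemma~\ref{lem:lin_infty_est}, via the log-convexity inequality $\|F\|_{L^p}\le\|F\|_{L^q}^{q/p}\|F\|_{L^\infty}^{1-q/p}$. The paper parametrizes by $\varepsilon'=2p\varepsilon$ and sets $q=4+\varepsilon'$, $\vartheta=q/p$, whereas you pick $q$ first and read off $\varepsilon'=(q-4)/(2p)$; the bookkeeping and the resulting exponents coincide.
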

 \begin{proof}
   We set $f(t,\theta):=\bigl|\sum_{(m,n)\in\sS_{N,M}} e^{-i\lambda_{m,n}t}e^{im\theta} a_{m,n}\bigr|$ for brevity.
   The estimate is nontrivial only if $\varepsilon\leq\frac{p-4}{2p}$. Furthermore, we set
   $\varepsilon'=2p\varepsilon>0$  and $\vartheta=\frac{4+\varepsilon'}{p}\leq 1$. Then,
   H\"older's inequality, Lemma~\ref{lem:lin_est} and Lemma~\ref{lem:lin_infty_est} imply
\[
   \|f\|_{L^p_{t,\theta}} = \|f^\vartheta f^{1-\vartheta}\|_{L^p_{t,\theta}}
   \leq \|f\|_{L^{4+\varepsilon'}_{t,\theta}}^\vartheta \|f\|_{L^\infty_{t,\theta}}^{1-\vartheta}
   \lesssim \Bigl(\frac{N}{M}\Bigr)^\varepsilon N^{\frac12-\frac1p} M^{\frac12-\frac2p} \|a\|_{\ell^2}.
\]
\end{proof}

\begin{prop}\label{prop:trilin_est}
  There exists $\delta>0$ such that for all $\phi_1,\phi_2,\phi_3\in L^2(M)$ and dyadic numbers
  $N_1\geq N_2\geq N_3\geq 1$ the estimate
\begin{multline*}
   \|P_{N_1}e^{it\Delta}\phi_1P_{N_2}e^{it\Delta}\phi_2P_{N_3}e^{it\Delta}\phi_3\|_{L^2(\tau_0\times M)}\\
   \lesssim \biggl(\frac{N_3}{N_1}+\frac{1}{N_2}\biggr)^\delta
   N_2 N_3 \prod_{j=1}^3 \|\phi_j\|_{L^2(M)}
\end{multline*}
  holds true.
\end{prop}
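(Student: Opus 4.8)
The plan is to reduce the three-dimensional tri-linear estimate on $M=\S\times\S^2$ to the two-dimensional spectral cluster estimate on $\S^2$ (Lemma~\ref{lem:trilin_spect_clust_est}) combined with the one-dimensional exponential-sum estimates on $\S$ (Corollary~\ref{coro:lin_est2}), exploiting the product structure of $M$ and of the free evolution. Writing $e^{it\Delta}=e^{it\partial_\theta^2}\otimes e^{it\Delta_{\S^2}}$ and expanding each $P_{N_j}e^{it\Delta}\phi_j$ into its Fourier/spherical-harmonic series, I would first bound the $L^2(\tau_0\times M)$-norm by the $L^2_tL^\infty_\theta L^2_\omega$-norm after freezing the $\S^2$-variable: the key observation is that the product of three spherical harmonic components can be controlled on $L^2(\S^2)$ by Lemma~\ref{lem:trilin_spect_clust_est}, which costs a factor $(\langle n_2\rangle\langle n_3\rangle)^{1/4}\lesssim (N_2 N_3)^{1/4}$ and, crucially, only involves the two \emph{smaller} degrees. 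This is the step where the product geometry pays off.

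\medskip

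Next I would handle the remaining $\theta$- and $t$-integrations. After the spectral cluster estimate has peeled off the $\omega$-dependence, what remains is (schematically) a product of three one-dimensional exponential sums in $(t,\theta)$, one for each factor, whose coefficients are the $L^2(\S^2)$-norms of the projected data. I would estimate this by H\"older in $(t,\theta)$: put the two highest-frequency factors ($N_1,N_2$) into $L^{p}_{t,\theta}(\tau_0\times\S)$ for some $p>4$ close to $4$, and the lowest-frequency factor ($N_3$) into $L^\infty_{t,\theta}$ or a large-exponent space, so that the product lands in $L^2$. The point of invoking Corollary~\ref{coro:lin_est2} rather than the cruder Lemma~\ref{lem:lin_est} is that the former records the refined bound $(N/M)^\varepsilon N^{1/2-1/p}M^{1/2-2/p}$ in terms of the \emph{thickness} $M$ of the spectral shell $\sqrt{\lambda_{m,n}}\in[b,b+M]$; the near-orthogonality in time forces the relevant point sets $\sS_{N,M}$ to be thin, which is exactly what produces the genuine gain $\delta>0$ and avoids the $\varepsilon$-loss that plagued \cite[Proposition~5.1]{BGT05b}. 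I expect to denote the frequency annuli for each dyadic block, sum over them, and exploit almost-orthogonality (Cauchy--Schwarz or a Schur-type argument) in these sums.

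\medskip

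The main obstacle — and the heart of the matter — is the bookkeeping that turns the $L^p$ and $L^\infty$ exponential-sum bounds into the clean factor $N_2 N_3$ with a strictly positive power of $(N_3/N_1+1/N_2)$ to spare. Tracking the exponents, the spectral cluster estimate contributes $(N_2 N_3)^{1/4}$, and the three one-dimensional factors contribute powers of $N_1,N_2,N_3$ and of the shell-thicknesses; these must combine so that the total power of $N_1$ vanishes (leaving no loss on the highest frequency), the total power of $N_2$ and $N_3$ is exactly one each, and there is a residual negative power of $N_1$ or $N_2$ available to form the gain. Realizing the gain requires two competing mechanisms: when $N_3\ll N_1$ the smallness comes from the frequency separation (the lowest-frequency factor is genuinely lower), whereas when all three frequencies are comparable the smallness must come from the thinness $M$ of the time-orthogonal shells (the $1/N_2$ term), and I would treat these two regimes in tandem, choosing $p>4$ close to $4$ and $\varepsilon>0$ small enough that both give a common $\delta>0$. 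Summing the resulting dyadic/shell pieces and absorbing the $(N/M)^\varepsilon$ factors without destroying the net gain is the delicate endgame.
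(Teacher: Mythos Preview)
Your overall plan matches the paper's: peel off the $\S^2$-variable via the tri-linear spectral cluster estimate (costing $(N_2N_3)^{1/4}$), reduce to a product of scalar exponential sums in $(t,\theta)$, and estimate those by H\"older together with Corollary~\ref{coro:lin_est2}. However, there is a genuine gap. You invoke only \emph{temporal} almost-orthogonality to make the shell thin (localizing $\sqrt{\lambda_{m_1,n_1}}$ to an interval of length $M=\max\{N_2^2/N_1,1\}$), but Corollary~\ref{coro:lin_est2} also requires the lattice set to sit in a cube of side $N$, and its bound scales like $N^{1/2-1/p}$. If you apply it with $N=N_1$ (the native box size for the highest-frequency factor) you pick up $N_1^{1/2-1/p_1}$, and even after inserting $M=N_2^2/N_1$ a positive power $N_1^{1/p_1}$ survives; the argument does not close, and this is precisely the ``total power of $N_1$ vanishes'' issue you flag but do not resolve. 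The paper fixes it by two preliminary \emph{spatial} almost-orthogonality reductions that shrink the box for $(m_1,n_1)$ to side $N_2$: first in the $\S$-variable (the output $\theta$-frequency $m_1+m_2+m_3$ pins $m_1$ to an interval of length $\sim N_2$), and then---the step missing from your outline---in the $\S^2$-variable, using that a product of spherical harmonics of degrees $n_1,n_2,\ldots$ lies in the span of harmonics of degree at most $n_1+O(N_2)$, so that contributions from $n_1$ in well-separated $N_2$-intervals are orthogonal in $L^2(\S^2)$. Only after this does Corollary~\ref{coro:lin_est2} get applied with $N=N_2$ (not $N_1$), and then all powers of $N_1$ cancel.

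Two smaller corrections. First, ``bound the $L^2(\tau_0\times M)$-norm by the $L^2_tL^\infty_\theta L^2_\omega$-norm'' goes the wrong way; what actually works is Plancherel in $(t,\theta)$ followed by the triangle inequality in $L^2(\S^2)$, which produces the scalar $(t,\theta)$-sum with coefficients $a^{(j)}_{m_j,n_j}=\|\Pi_{n_j}\Theta_{m_j}\phi_j\|_{L^2(\S^2)}$ and allows the return to physical space. Second, only the $N_1$-factor needs Corollary~\ref{coro:lin_est2}; the $N_2$- and $N_3$-factors are handled by the plain Lemma~\ref{lem:lin_est} with exponents $p_2>4$ and $p_3>12$, and the residual room $\tfrac14-\tfrac{3}{p_3}>0$ is what upgrades the $N_2/N_1$-gain coming from the shell thickness to the $N_3/N_1$-gain in the statement. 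Your dichotomy ``$N_3\ll N_1$ versus all comparable'' is therefore not quite the operative one: the relevant split is whether $M=N_2^2/N_1$ or $M=1$, i.e.\ whether $N_2^2\geq N_1$ or not, which produces the two terms $N_2/N_1$ and $1/N_2$ respectively.
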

\begin{proof}
  We will exploit almost orthogonality in the first three steps to show that we may assume the
  highest frequency to be further localized. In the last step we will estimate the remaining tri-linear
  estimate using the foregoing results.
  First, we recall that for $t\in\tau_0$ and $(\theta,\omega)\in\S\times\S^2$
\begin{multline*}
   P_{N_1}e^{it\Delta}\phi_1P_{N_2}e^{it\Delta}\phi_2P_{N_3}e^{it\Delta}\phi_3(\theta,\omega)\\
   = \sum_{\sN} \prod_{j=1}^3 e^{-i\lambda_{m_j,n_j}t} e^{im_j\theta} \Pi_{n_j}\Theta_{m_j}\phi_j(\omega),
\end{multline*}
  where $\sN=\sN_1\times\sN_2\times\sN_3$ and
\begin{equation}\label{eq:def_set}
   \sN_j=\bigl\{ (m,n)\in\Z\times\N_0 : N_j\leq\langle\lambda_{m,n}\rangle^{\frac12}<2N_j \bigr\},
   \quad j=1,2,3.
\end{equation}
  In this proof $\sum_{\sN}$ should be understood as $\sum_{(m_1,n_1,m_2,n_2,m_3,n_3)\in\sN}$.

  We apply step~1--3 only if $N_1>N_2$, otherwise we will proceed with step~4 directly
  (with $\sS:=\{N_1,\ldots,2N_1-1\}$ and $M:=N_1=N_2$).

  \emph{Step 1.}
  Due to spatial almost orthogonality induced by the $\S$ component, it suffices to prove the desired estimate
  in the case
\[
   P_{\sR}P_{N_1}e^{it\Delta}\phi_1P_{N_2}e^{it\Delta}\phi_2P_{N_3}e^{it\Delta}\phi_3,
\]
  where $\sR\subseteq [b,b+N_2]\times[0,2N_1]$ for some $b\in\Z$. We spell out more details in the next step.

  \emph{Step 2.}
  Now, we use almost orthogonality that comes from the $\S^2$ component. It is a well-known
  fact that the product
  of a spherical harmonic of degree $n$ with another of degree $m$ can be expanded in terms of spherical
  harmonics of degree less or equal to $n+m$. Furthermore, it is well-known that two spherical harmonics of
  different degree are orthogonal in $L^2(\S^d)$, $d\in\N$.
  We finally remark that complex conjugation does not change the degree of a spherical
  harmonic. Details may be found in \cite[Section~VI.2]{SW71}. Now, we prove that it suffices
  to consider the case, where $n_1$ is located in an interval of the size of the second
  highest frequency $N_2$. To that purpose, we define the following partition of $\N_0$:
\[
   \N_0=\dot{\bigcup_{k\in\N_0}} I_k,\quad \text{where}\quad I_k=\bigl[kN_2,(k+1)N_2\bigr).
\]
  We claim that for fixed $\theta\in\S$ and $t\in\tau_0$ it holds that
\begin{multline*}
   \|P_{\sR}P_{N_1}e^{it\Delta}\phi_1P_{N_2}e^{it\Delta}\phi_2P_{N_3}e^{it\Delta}\phi_3(\theta)\|_{L^2(\S^2)}^2\\
   \sim \sum_{k\in\N_0} \bigl\|P_{\sR_k}P_{N_1}e^{it\Delta}\phi_1
        P_{N_2}e^{it\Delta}\phi_2P_{N_3}e^{it\Delta}\phi_3(\theta)\bigr\|_{L^2(\S^2)}^2,
\end{multline*}
  where $\sR_k=\sR\cap(\Z\times I_k)$. Let $k,\widetilde k\in\N_0$, then
\begin{multline*}
   \bigl\langle P_{\sR_k}P_{N_1}e^{it\Delta}\phi_1P_{N_2}e^{it\Delta}\phi_2P_{N_3}e^{it\Delta}\phi_3(\theta), \\
      P_{\sR_{\widetilde k}}P_{N_1}e^{it\Delta}\phi_1P_{N_2}e^{it\Delta}\phi_2
      P_{N_3}e^{it\Delta}\phi_3(\theta)\bigr\rangle_{L^2(\S^2)} \\
   = \sum_{\substack{\sR_k\times\sN_2\times\sN_3,\\\sR_{\widetilde k}\times\sN_2\times\sN_3}}
      I_{{\bf m}, {\bf n}}\prod_{j=1}^3 e^{-i(\lambda_{m_j,n_j}-\lambda_{\widetilde m_j,\widetilde n_j})t}
      e^{i(m_j-\widetilde m_j)\theta},
\end{multline*}
  where ${\bf m}=(m_1,m_2,m_3,\widetilde m_1, \widetilde m_2, \widetilde m_3)$,
  ${\bf n}=(n_1,n_2,n_3,\widetilde n_1,\widetilde n_2,\widetilde n_3)$, and
  $I_{{\bf m}, {\bf n}}$ is defined by
\[
   I_{{\bf m}, {\bf n}} = \int_{\S^2} \prod_{j=1}^3 \Pi_{n_j}\Theta_{m_j}\phi_j(\omega)
      \overline{\Pi_{\widetilde n_j}\Theta_{\widetilde m_j}\phi_j(\omega)} \; d\omega.
\]
  Without loss of generality we may assume $n_1>\widetilde n_1$. Then
\[
   Y:=\overline{\Pi_{\widetilde n_1}\Theta_{\widetilde m_1}\phi_j}
   \prod_{j=2}^3 \Pi_{n_j}\Theta_{m_j}\phi_j
      \overline{\Pi_{\widetilde n_j}\Theta_{\widetilde m_j}\phi_j}\in L^2(\S^2)
\]
  can be expanded in terms of spherical harmonics of degree less or
  equal to $\widetilde n_1+8N_2$. Hence, if $|k-\widetilde k|\gg 1$, then
\[
   I_{{\bf m}, {\bf n}} = \bigl\langle \Pi_{n_1}\Theta_{m_1}\phi_1, 
     \overline{Y} \bigr\rangle_{L^2(\S^2)} = 0.
\]

  \emph{Step 3.} Using almost orthogonality in time, we may gain a small power of
    $M:=\max\bigl\{\frac{N_2^2}{N_1},1\bigr\}$. Similar ideas have been used in the proofs of
    \cite[Proposition~3.5]{HTT11} and \cite[Proposition~3.6]{H13}, for instance. We define the partition
\[
   \N_0=\dot{\bigcup_{\ell\in\N_0}} J_\ell\quad \text{where}\quad
   J_\ell=\bigl[\ell M,(\ell+1)M\bigr).
\]
   We show that we may assume $\sqrt{\lambda_{m_1,n_1}}$ to vary in an interval of length
   $M$: Fix $(\theta,\omega)\in \S\times\S^2$ and set
\[
   \sS_{k,\ell}=\bigl\{(m_1,n_1)\in \sR_k: \sqrt{\lambda_{m_1,n_1}}\in J_\ell \bigr\}, \quad k,\ell\in\N_0,
\]
   then we claim that
\begin{multline*}
   \|P_{\sR}P_{N_1}e^{it\Delta}\phi_1P_{N_2}e^{it\Delta}\phi_2
          P_{N_3}e^{it\Delta}\phi_3(\theta,\omega)\|_{L^2_t(\tau_0)}^2\\
   \sim \sum_{k,\ell\in\N_0} \|P_{\sS_{k,\ell}}P_{N_1}e^{it\Delta}\phi_1P_{N_2}e^{it\Delta}\phi_2
          P_{N_3}e^{it\Delta}\phi_3(\theta,\omega)\bigr\|_{L^2_t(\tau_0)}^2.
\end{multline*}
   We consider the inner product
\begin{multline*}
   \bigl\langle
     P_{\sS_{k,\ell}}P_{N_1}e^{it\Delta}\phi_1P_{N_2}e^{it\Delta}\phi_2P_{N_3}e^{it\Delta}\phi_3(\theta,\omega),\\
     P_{\sS_{k,\widetilde \ell}}P_{N_1}e^{it\Delta}\phi_1P_{N_2}e^{it\Delta}\phi_2P_{N_3}e^{it\Delta}\phi_3(\theta,\omega)
   \bigr\rangle_{L^2_t(\tau_0)}\\
   = \sum_{\substack{\sS_{k,\ell}\times\sN_2\times\sN_3,\\\sS_{k,\widetilde\ell}\times\sN_2\times\sN_3}}
       I_{{\bf m}, {\bf n}} \prod_{j=1}^3 e^{i(m_j-\widetilde m_j)\theta}
       \Pi_{n_j}\Theta_{m_j}\phi_j(\omega)\overline{\Pi_{\widetilde n_j}\Theta_{\widetilde m_j}\phi_j(\omega)},
\end{multline*}
   where ${\bf m}=(m_1,m_2,m_3,\widetilde m_1, \widetilde m_2, \widetilde m_3)$,
   ${\bf n}=(n_1,n_2,n_3,\widetilde n_1,\widetilde n_2,\widetilde n_3)$, and
\[
   I_{{\bf m}, {\bf n}} = \int_{\tau_0} e^{-i(\lambda_{m_1,n_1}+\lambda_{m_2,n_2}+\lambda_{m_3,n_3}
      -\lambda_{\widetilde m_1,\widetilde n_1}-\lambda_{\widetilde m_2,\widetilde n_2}-\lambda_{\widetilde m_3,\widetilde n_3})t}\,dt.
\]
   Assuming $|\ell-\widetilde\ell|\gg 1$, we may estimate the modulus of the phase from below by
\[
   \bigl|\bigl(\sqrt{\lambda_{m_1,n_1}}+\sqrt{\lambda_{\widetilde m_1,\widetilde n_1}}\bigr)
   \bigl(\sqrt{\lambda_{m_1,n_1}}-\sqrt{\lambda_{\widetilde m_1,\widetilde n_1}}\bigr)\bigr|-16N_2^2\\
   \gtrsim |\ell-\widetilde\ell|N_2^2,
\]
   and since all the eigenvalues are integers, we deduce $ I_{{\bf m}, {\bf n}} =0$.

  \emph{Step 4.} Thanks to the first three steps, we may replace $P_{N_1}e^{it\Delta}\phi_1$ by
  $P_\sS P_{N_1}e^{it\Delta}\phi_1$, where $\sS=\sS_{k,\ell}$ for some $k,\ell\in N_0$.
  Recall that for $t\in\tau_0$ and $(\theta,\omega)\in\S\times\S^2$
\begin{multline*}
   P_{\sS}P_{N_1}e^{it\Delta}\phi_1P_{N_2}e^{it\Delta}\phi_2P_{N_3}e^{it\Delta}\phi_3(\theta,\omega)\\
   = \sum_{\sM} \prod_{j=1}^3 e^{-i\lambda_{m_j,n_j}t} e^{im_j\theta} \Pi_{n_j}\Theta_{m_j}\phi_j(\omega),
\end{multline*}
  where $\sM:=\sS\times\sN_2\times\sN_3$ and $\sN_j$, $j=2,3$, are defined
  in \eqref{eq:def_set}.
  The next step is a nice way to treat the $L^2(\S^2)$-norm separately without losing
  oscillations in the $\S$ component and in time. Note that this was also used by
  Burq--G\'erard--Tzvetkov in the proof of \cite[Proposition~5.1]{BGT05b}.
  Plancherel's identity with respect to $t$ and $\theta$ and the triangle inequality for the
  $L^2(\S^2)$ norm yield
\begin{multline*}
  \| P_{\sS}P_{N_1}e^{it\Delta}\phi_1P_{N_2}e^{it\Delta}\phi_2P_{N_3}e^{it\Delta}\phi_3 \|_{L^2(\tau_0\times M)}^2\\
  \begin{aligned}
     &\leq \sum_{\tau\in\N_0,\;\xi\in\Z} \Bigg\|
          \sum_{\substack{(m_1,n_1,m_2,n_2,m_3,n_3)\in\sM:\\\tau=\lambda_{m_1,n_1}+\lambda_{m_2,n_2}+\lambda_{m_3,n_3},\\\xi=m_1+m_2+m_3}}
          \prod_{j=1}^3 \Pi_{n_j}\Theta_{m_j}\phi_j \Bigg\|_{L^2(\S^2)}^2\\
     &\leq \sum_{\tau\in\N_0,\;\xi\in\Z} \Biggl[
          \sum_{\substack{(m_1,n_1,m_2,n_2,m_3,n_3)\in\sM:\\\tau=\lambda_{m_1,n_1}+\lambda_{m_2,n_2}+\lambda_{m_3,n_3},\\\xi=m_1+m_2+m_3}}
          \biggl\| \prod_{j=1}^3\Pi_{n_j}\Theta_{m_j}\phi_j \biggr\|_{L^2(\S^2)} \Biggr]^2.
  \end{aligned}
\end{multline*}
   In contrast to \cite[Proposition~5.1]{BGT05b}, we do not estimate the number of terms of the inner sum,
   but we go back to the physical space: We set $a_{m_j,n_j}^{(j)} := \| \Pi_{n_j}\Theta_{m_j}\phi_j \|_{L^2(\S^2)}$
   for $j=1,2,3$ and apply Lemma~\ref{lem:trilin_spect_clust_est} as well as Plancherel's identity
   with respect to $t$ and $\theta$ to obtain
\begin{multline}\label{eq:trilin_first_step}
  \| P_{\sS}P_{N_1}e^{it\Delta}\phi_1P_{N_2}e^{it\Delta}\phi_2P_{N_3}e^{it\Delta}\phi_3 \|_{L^2(\tau_0\times M)}^2\\
  \begin{aligned}
     &\lesssim (N_2N_3)^{\frac12} \sum_{\tau\in\N_0,\;\xi\in\Z} \Biggl( 
          \sum_{\substack{(m_1,n_1,m_2,n_2,m_3,n_3)\in\sM:\\\tau=\lambda_{m_1,n_1}+\lambda_{m_2,n_2}+\lambda_{m_3,n_3},\\\xi=m_1+m_2+m_3}}
          \prod_{j=1}^3 a_{m_j,n_j}^{(j)} \Biggr)^2\\
     &\lesssim (N_2N_3)^{\frac12} \biggl\| \sum_{\sM}
          \prod_{j=1}^3e^{-i\lambda_{m_j,n_j}t}e^{im_j\theta} a_{m_j,n_j}^{(j)}
         \biggr\|_{L^2_{t,\theta}(\tau_0\times\S)}^2.
  \end{aligned}
\end{multline}
  Choose $p_1>4$ and $12<p_3<\infty$ and let $p_2>4$ be defined via the H\"older relation
  $\frac12=\frac{1}{p_1}+\frac{1}{p_2}+\frac{1}{p_3}$.
  We apply H\"older's estimate to obtain
\begin{multline*}
  \| P_{\sS}P_{N_1}e^{it\Delta}\phi_1P_{N_2}e^{it\Delta}\phi_2P_{N_3}e^{it\Delta}\phi_3 \|_{L^2(\tau_0\times M)}\\
    \lesssim (N_2N_3)^{\frac14} \biggl\| \sum_{(m_1,n_1)\in\sS} e^{-i\lambda_{m_1,n_1}t}e^{im_1\theta} a_{m_1,n_1}^{(1)}
          \biggr\|_{L^{p_1}_{t,\theta}(\tau_0\times\S)}\\
    \times \prod_{j=2}^3\, \biggl\| \sum_{(m_j,n_j)\in\sN_j} e^{-i\lambda_{m_j,n_j}t}e^{im_j\theta} a_{m_j,n_j}^{(j)}
                 \biggr\|_{L^{p_j}_{t,\theta}(\tau_0\times\S)}. 
\end{multline*}
  We estimate the first term using Corollary~\ref{coro:lin_est2} and the other terms using
  Lemma~\ref{lem:lin_est}. Then, we obtain for all $\varepsilon>0$
\begin{multline*}
   \| P_{\sS}P_{N_1}e^{it\Delta}\phi_1P_{N_2}e^{it\Delta}\phi_2P_{N_3}e^{it\Delta}\phi_3 \|_{L^2(\tau_0\times M)} \\
   \begin{aligned}
     &\lesssim \bigl(N_2N_3\bigr)^{\frac14} M^{\frac12-\frac{2}{p_1}-\varepsilon}N_2^{\frac32-\frac{1}{p_1}-\frac{3}{p_2}+\varepsilon}
         N_3^{1-\frac{3}{p_3}} \prod_{j=1}^3\|\phi_j\|_{L^2(M)}\\
     &\lesssim \biggl(\frac{N_2}{N_1}+\frac{1}{N_2}\biggr)^{\frac12-\frac{2}{p_1}-\varepsilon} N_2^{\frac34+\frac{3}{p_3}}
         N_3^{\frac54-\frac{3}{p_3}} \prod_{j=1}^3\|\phi_j\|_{L^2(M)}.
   \end{aligned}
\end{multline*}
  Since $p_1>4$ and $p_3>12$, this implies the desired estimate provided
  $\varepsilon>0$ is sufficiently small.
\end{proof}

\begin{remark}
  The proof of Proposition~\ref{prop:trilin_est} does not extend to the case $\S\times\S^2_\rho$ directly,
  where $\S^2_\rho$ is the embedded sphere of radius $\rho>0$ in $\R^3$. However, preliminary calculations suggest
  that Proposition~\ref{prop:trilin_est} may be proved in the more general case by more
  technical arguments. This will be addressed in the PhD thesis of the second author.
\end{remark}

\section{Function spaces and the nonlinear estimate}\label{sect:funct}
We briefly recall the function spaces $U^p$ and $V^p$ introduced by Koch--Tataru \cite{KT05}, which have been successfully employed in the context of critical dispersive equations. We refer the reader to \cite{HHK09} or \cite{KTV14} for more details and to \cite[Section 2]{HTT11}, \cite[Section 2]{H13}, and \cite[Section 2]{HTT14} for this machinery in the context of the nonlinear Schr\"odinger equations on manifolds. 
\begin{definition}
Let $1\leq p<\infty$.
\begin{enumerate}
\item A step function $a\colon\R \to L^2$ is called a $U^p$-atom, if
\[
a(t)=\sum_{k=1}^K \chi_{[t_{k-1},t_k)}a_k, \quad \sum_{k=1}^K\|a_k\|_{L^2}^p=1
\]
for a partition $-\infty<t_0<\ldots<t_K\leq \infty$. The space $U^p$ is defined as the corresponding atomic space.
\item The space $V^p$ is the space of right-continuous functions $v\colon\R \to L^2$ such that
\[\|v\|_{V^p}^p
=\sup_{-\infty<t_0<\ldots<t_K\leq \infty}\sum_{k=1}^K\|v(t_k)-v(t_{k-1})\|_{L^2}^p<+\infty
\]
with the convention $v(+\infty):=0$, and in addition we require $\lim_{t \to -\infty}v(t)=0$.
\end{enumerate}
\end{definition}

We use the resolution spaces as defined in \cite[Definition~2.3]{H13}:
\begin{definition} Let $s\in \R$.
\begin{enumerate}
\item $X^s$ is defined as the space of all $u\colon\R \to H^s(M)$ such that $e^{-it\Delta} P_Nu \in U^2$ for all dyadic $N\geq 1$ and
\[
\|u\|_{X^s}:=\biggl(\sum_{N\geq 1}N^{2s}\|e^{-it\Delta} P_Nu\|_{U^2}^2\biggr)^{\frac12}<+\infty.
\]
\item $Y^s$ is defined as the space of all $u\colon\R \to H^s(M)$ such that $e^{-it\Delta} P_Nu \in V^2$ for all dyadic $N\geq 1$ and
\[
\|u\|_{Y^s}:=\biggl(\sum_{N\geq 1}N^{2s}\|e^{-it\Delta} P_Nu\|_{V^2}^2\biggr)^{\frac12}<+\infty.
\]
\item For an interval $\tau\subset \R$ we denote by $X^s(\tau)$ resp.\ $Y^s(\tau)$ the restriction space.
\end{enumerate}
\end{definition}

Next, we show how Proposition \ref{prop:trilin_est} implies Theorem \ref{thm:main}. We remark that this derivation does not depend on the specifics of $M=\S\times \S^2$, it is similar to \cite[Corollary~3.7]{H13}, cp.\ also \cite{HTT11,HTT14} for corresponding arguments using unit scales instead of dyadic scales.
\begin{prop}\label{prop:trilin_est_y}
There exists $\delta>0$ such that for all dyadic numbers $N_1\geq N_2\geq N_3\geq 1$ and $P_{N_j}u_j\in Y^0$
($j=1,2,3$) the following holds true
\begin{equation}\label{eq:trilin_est_y}
   \|P_{N_1}\widetilde{u_1}P_{N_2}\widetilde{u_2}P_{N_3}\widetilde{u_3}\|_{L^2(\tau_0\times M)}
   \lesssim \biggl(\frac{N_3}{N_1}+\frac{1}{N_2}\biggr)^\delta N_2N_3 \prod_{j=1}^3 \|P_{N_j}u_j\|_{Y^0},
\end{equation}
where $\widetilde{u_j}$ denotes either $u_j$ or $\overline{u_j}$.
\end{prop}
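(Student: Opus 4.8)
The plan is to deduce \eqref{eq:trilin_est_y} from Proposition~\ref{prop:trilin_est} purely through the abstract $U^p$--$V^p$ calculus of Koch--Tataru and Hadac--Herr--Koch \cite{KT05,HHK09}, so that no further harmonic analysis on $\S\times\S^2$ is needed; the argument parallels \cite[Corollary~3.7]{H13}. As a preliminary remark, I would note that Proposition~\ref{prop:trilin_est} is insensitive to replacing any factor $e^{it\Delta}\phi_j$ by its complex conjugate: since $\overline{e^{it\Delta}\phi_j}=e^{-it\Delta}\overline{\phi_j}$ only reflects the underlying frequency set via $(m,n)\mapsto(-m,n)$ (complex conjugation preserves the degree of a spherical harmonic and merely conjugates the $\theta$-Fourier coefficients), every ingredient of the proof---the almost-orthogonality in $\S$, in $\S^2$ and in time, together with Lemma~\ref{lem:lin_est}, Corollary~\ref{coro:lin_est2} and Lemma~\ref{lem:trilin_spect_clust_est}---is invariant under such reflections. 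Hence Proposition~\ref{prop:trilin_est} holds verbatim for every choice $\widetilde{\phi_j}\in\{e^{it\Delta}\phi_j,\overline{e^{it\Delta}\phi_j}\}$.

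First I would invoke the transference principle \cite[Proposition~2.19]{HHK09}. Because the target norm is $L^2(\tau_0\times M)=L^2_tL^2_x$ and Proposition~\ref{prop:trilin_est} is an estimate for free evolutions on $\tau_0$, one writes a $U^2$-atom as a step function built from free data, applies Proposition~\ref{prop:trilin_est} on each subinterval of $\tau_0$, and sums using the $\ell^2$-normalisation of the atom. Since the gain factor is a constant for this argument, this upgrades the estimate to
\[
   \|P_{N_1}\widetilde{u_1}P_{N_2}\widetilde{u_2}P_{N_3}\widetilde{u_3}\|_{L^2(\tau_0\times M)}
   \lesssim \Bigl(\tfrac{N_3}{N_1}+\tfrac1{N_2}\Bigr)^{\delta} N_2N_3 \prod_{j=1}^3 \|e^{-it\Delta}P_{N_j}u_j\|_{U^2},
\]
which is exactly \eqref{eq:trilin_est_y} except that the smaller norm $\|\cdot\|_{U^2}$ appears in place of $\|e^{-it\Delta}P_{N_j}u_j\|_{V^2}=\|P_{N_j}u_j\|_{Y^0}$.

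The only genuinely non-automatic point---and the step I expect to be the main obstacle---is the passage from $U^2$ to the strictly larger space $V^2$. For this I would apply the interpolation lemma \cite[Proposition~2.20]{HHK09}, feeding in the sharp $U^2$ bound above together with a crude bound in a higher exponent space obtained from Bernstein's inequality $\|P_Nf\|_{L^6_x}\lesssim N\|P_Nf\|_{L^2_x}$, H\"older in $x$, and the embedding $V^2\hookrightarrow U^q$ ($q>2$); the latter has a constant that is merely a fixed power of $N_1$. The interpolation then produces the $V^2$ estimate at the price of a factor logarithmic in the ratio of the two constants, hence $\lesssim(\log N_1)^3$. Off the diagonal this loss is harmless, since a logarithm is dominated by the genuine power gain $(\tfrac{N_3}{N_1}+\tfrac1{N_2})^{\delta}$ and can be absorbed by shrinking $\delta$ to some $\delta'>0$. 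The delicate regime is $N_1\sim N_2\sim N_3$, where the gain degenerates to a constant and cannot by itself swallow the logarithm; here the balance between the sharp and the auxiliary estimate must be arranged so that the ratio of constants fed into \cite[Proposition~2.20]{HHK09}, and thus the logarithmic factor, stays bounded and no net loss remains. Making this last balance precise is the part of the argument I expect to require the most care; once it is settled, combining the two regimes yields \eqref{eq:trilin_est_y} with $\delta'$ in place of $\delta$.
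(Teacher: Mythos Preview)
Your overall strategy coincides with the paper's: reduce to the conjugation-free case, transfer Proposition~\ref{prop:trilin_est} to $U^2$-atoms, and then interpolate against a cruder $U^q$-bound to reach $V^2$. The gap is exactly the point you single out as ``delicate,'' and the auxiliary estimate you propose is too weak to close it.

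With pure Bernstein $\|P_Nf\|_{L^6_x}\lesssim N\|P_Nf\|_{L^2_x}$ and $U^q\hookrightarrow L^\infty_tL^2_x$ the auxiliary constant is $N_1N_2N_3$. On the diagonal $N_1=N_2=N_3=N$ the sharp $U^2$-constant from Proposition~\ref{prop:trilin_est} is $\sim N^2$, so the ratio fed into the interpolation lemma is $\sim N$ and the output carries a factor $(\log N)^3$; since the gain $(\tfrac{N_3}{N_1}+\tfrac{1}{N_2})^\delta$ is $\sim 1$ there, nothing can absorb it. No choice of a single Bernstein-type auxiliary makes this ratio bounded on the diagonal while still dominating the $U^2$-constant.

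The paper's remedy is to draw a \emph{second} consequence from Proposition~\ref{prop:trilin_est} itself: taking $N_1=N_2=N_3=N$ and $\phi_1=\phi_2=\phi_3$ yields the linear Strichartz bound $\|P_Ne^{it\Delta}\phi\|_{L^6(\tau_0\times M)}\lesssim N^{2/3}\|P_N\phi\|_{L^2}$, which transfers to $U^6$ and, via H\"older, gives a trilinear $U^6$-bound with constant $(N_1N_2N_3)^{2/3}$. On the diagonal this matches $N_2N_3$ exactly, so the ratio is $O(1)$ and no logarithm appears; in the regime $N_2N_3>N_1$ the ratio is $\lesssim(N_1/N_3)^{2/3}$ and the resulting $(\log(N_1/N_3))^3$ is swallowed by shrinking $\delta$. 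For the complementary regime $N_2N_3\le N_1$ a \emph{different} auxiliary estimate is used, with constant $(N_2N_3)^{3/2}$ \emph{independent of $N_1$}: put the high-frequency factor in $L^\infty_tL^2_x$ and the two low-frequency factors in $L^\infty_{t,x}$ via Sobolev. Then the logarithm is $\lesssim(\log N_2)^3$, absorbed by the $N_2^{-\delta}$ part of the gain. Thus two auxiliary estimates and a case split are required; once you insert these in place of the Bernstein bound, your argument becomes the paper's.
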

\begin{proof}
Since the $L^2$-norm on the left hand side does not change under complex conjugation of any factor, we may ignore possible complex conjugations.

\emph{Step 1.} We start proving estimate \eqref{eq:trilin_est_y} with $Y^0$ replaced by $X^0$. In this case, it suffices to consider $U^2$-atoms $a_1$, $a_2$, $a_3$, given as
\[
P_{N_j}a_j=\sum_{k=1}^{K_j}\chi_{I_{k,j}}e^{it\Delta}P_{N_j}\phi_{k,j}, \quad \sum_{k=1}^{K_j}\|\phi_{k,j}\|^2_{L^2}=1,
\]
with pairwise disjoint right-open intervals $I_{1,j}, I_{2,j}, \ldots, I_{K_j,j}$.
Now,
\[
   \|P_{N_1}a_1P_{N_2}a_2P_{N_3}a_3\|_{L^2}^2\leq \sum_{k_1,k_2,k_3}\|e^{it\Delta}P_{N_1}\phi_{k_1,1}e^{it\Delta}P_{N_2}\phi_{k_2,2}e^{it\Delta}P_{N_3}\phi_{k_3,3}\|^2_{L^2}
\]
and Proposition~\ref{prop:trilin_est} implies
\[
   \|P_{N_1}a_1P_{N_2}a_2P_{N_3}a_3\|_{L^2}\leq C_{\delta}(N_1,N_2,N_3),
\]
with the constant $ C_{\delta}(N_1,N_2,N_3)$ from Proposition~\ref{prop:trilin_est}, which yields
\begin{equation}\label{eq:trilin_est_u2}
   \|P_{N_1}u_1P_{N_2}u_2P_{N_3}u_3\|_{L^2}
   \leq  C_{\delta}(N_1,N_2,N_3) \prod_{j=1}^3 \|e^{-it\Delta}P_{N_j}u_j\|_{U^2}.
\end{equation}

\emph{Step 2.} Now, choosing $N_1=N_2=N_3=N$ and $\phi_1=\phi_2=\phi_3$ in Proposition~\ref{prop:trilin_est}, we obtain
\begin{equation*}
  \|P_Ne^{it\Delta}\phi\|_{L^6}\ls N^{\frac23}\|P_N\phi\|_{L^2}.
\end{equation*}
As above, the estimate carries over to $U^6$-atoms, hence
\begin{equation*}
  \|P_Nu\|_{L^6}\ls N^{\frac23}\|e^{-it\Delta}P_Nu\|_{U^6},
\end{equation*}
and for general $N_1\geq N_2\geq N_3\geq 1$, by H\"older's inequality,
\begin{equation}\label{eq:trilin_est_u6}
\|P_{N_1}u_1P_{N_2}u_2P_{N_3}u_3\|_{L^2}
   \ls (N_1N_2N_3)^{\frac23} \prod_{j=1}^3 \|e^{-it\Delta}P_{N_j}u_j\|_{U^6}.
 \end{equation}
Also, by H\"older's inequality and the Sobolev embedding, see \cite[formula~(2.6)]{S88} and \cite[Lemma~3.4]{H13}, we obtain
\begin{align*}
  \|P_{N_1}u_1P_{N_2}u_2P_{N_3}u_3\|_{L^2}&\leq|\tau_0|^{\frac12}\|P_{N_1}u_1\|_{L^\infty_t L^2_x}\| P_{N_2}u_2\|_{L^\infty}\|P_{N_3}u_3\|_{L^\infty}\\
  &\ls  (N_2N_3)^{\frac32}\prod_{j=1}^3 \|P_{N_j}u_j\|_{L^\infty_t L^2_x}.
\end{align*}
For any $p\geq 1$, using $U^p\hookrightarrow L^\infty_tL^2_x$, we obtain the bound
\begin{equation}\label{eq:off}
  \|P_{N_1}u_1P_{N_2}u_2P_{N_3}u_3\|_{L^2}
  \ls (N_2N_3)^{\frac32}\prod_{j=1}^3 \|e^{-it\Delta}P_{N_j}u_j\|_{U^p},
\end{equation}
which is not scale invariant, but the constant does not depend on $N_1$.

\emph{Step 3.} We distinguish two cases:

\underline{Case a)} $N_2N_3>N_1$. In this case, we interpolate \eqref{eq:trilin_est_u2} and \eqref{eq:trilin_est_u6} using \cite[Lemma~2.4]{H13} and obtain
\[
   \|P_{N_1}u_1P_{N_2}u_2P_{N_3}u_3\|_{L^2} \ls A_{\delta}\prod_{j=1}^3 \|e^{-it\Delta}P_{N_j}u_j\|_{V^2},
\]
where
\begin{align*}
A_{\delta}&=C_{\delta}(N_1,N_2,N_3) \Bigl( \ln \frac{(N_1N_2N_3)^{\frac23}}{C_{\delta}(N_1,N_2,N_3)} +1\Bigr)^3\\
&\ls C_{\delta}(N_1,N_2,N_3)\Bigl( \ln \frac{N_1}{N_3} +1\Bigr)^3\ls C_{\delta'}(N_1,N_2,N_3)
\end{align*}
for any $\delta'<\delta$.

\underline{Case b)} $N_2N_3\leq N_1$. Now, we interpolate \eqref{eq:trilin_est_u2} and \eqref{eq:off} using \cite[Lemma~2.4]{H13} and obtain
\[
   \|P_{N_1}u_1P_{N_2}u_2P_{N_3}u_3\|_{L^2} \ls B_{\delta}\prod_{j=1}^3 \|e^{-it\Delta}P_{N_j}u_j\|_{V^2},
 \]
where
\begin{align*}
B_{\delta}&=C_{\delta}(N_1,N_2,N_3) \Bigl( \ln \frac{(N_2N_3)^{\frac32}}{C_{\delta}(N_1,N_2,N_3)} +1\Bigr)^3\\
&\ls C_{\delta}(N_1,N_2,N_3)( \ln N_2+1)^3\ls C_{\delta'}(N_1,N_2,N_3)
\end{align*}
for any $\delta'<\delta$, and the claim follows.
\end{proof}
In order to prove Theorem \ref{thm:main} we intend to solve the integral equation
\begin{equation}\label{eq:int}
u(t)=e^{it\Delta}u_0 \mp i \mathcal{I}(|u|^4u)(t), \qquad \mathcal{I}(f)(t):=\int_0^t e^{i(t-s)\Delta}f(s) \,ds,
\end{equation} 
for $u_0 \in H^1(M)$ by invoking the contraction mapping principle in a small closed ball in the space $X^1(\tau_0)\subset C(\tau_0,H^1(M))$.
For this, it suffices to provide the following estimate, cp.\ \cite[Proposition~4.2]{H13} and \cite[Proposition~4.1]{HTT11}:
\begin{prop}\label{prop:nonl}
For all $u,v\in X^1(\tau_0)$,
\begin{equation*}
\|\mathcal{I}(|u|^4u)-\mathcal{I}(|v|^4v)\|_{X^1(\tau_0)}\ls \bigl(\|u\|^4_{X^1(\tau_0)}+\|v\|^4_{X^1(\tau_0)}\bigr) \|u-v\|_{X^1(\tau_0)}.
\end{equation*}
\end{prop}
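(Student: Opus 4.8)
The plan is to combine the duality structure of the spaces $X^1$ and $Y^{-1}$ with the extended tri-linear estimate of Proposition~\ref{prop:trilin_est_y}, reducing the nonlinear bound to a six-linear dyadic summation.

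\emph{Multilinear reduction and duality.} First I would use multilinearity to reduce to a single estimate. Writing $|u|^4u=u^3\bar u^2$ and telescoping, the difference $|u|^4u-|v|^4v$ is a finite sum of quintic monomials in $u,\bar u,v,\bar v$ in each of which exactly one factor equals $u-v$ or $\overline{u-v}$. Estimating that distinguished factor by $\|u-v\|_{X^1(\tau_0)}$ and the remaining four by $\|u\|_{X^1(\tau_0)}$ or $\|v\|_{X^1(\tau_0)}$, the asserted bound follows once we establish
\[
   \|\mathcal{I}(\widetilde{w_1}\widetilde{w_2}\widetilde{w_3}\widetilde{w_4}\widetilde{w_5})\|_{X^1(\tau_0)}\ls \prod_{j=1}^5\|w_j\|_{X^1(\tau_0)},
\]
where each $\widetilde{w_j}$ denotes $w_j$ or $\overline{w_j}$. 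To estimate the left-hand side I would invoke the standard duality estimate for the Duhamel operator in this functional framework (see \cite{HHK09} and \cite[Section~2]{H13}),
\[
   \|\mathcal{I}(f)\|_{X^1(\tau_0)}\ls \sup\Bigl\{\, \bigl|\textstyle\int_{\tau_0\times M} f\,\overline{w_0}\,dx\,dt\bigr| : w_0\in Y^{-1}(\tau_0),\ \|w_0\|_{Y^{-1}(\tau_0)}\le 1 \,\Bigr\},
\]
which turns the problem into bounding the six-linear form $|\int_{\tau_0\times M} w_0\,\widetilde{w_1}\cdots\widetilde{w_5}\,dx\,dt|$ by $\prod_{j=1}^5\|w_j\|_{X^1(\tau_0)}$, uniformly over $\|w_0\|_{Y^{-1}(\tau_0)}\le1$ (the conjugation on $w_0$ being irrelevant).

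\emph{Dyadic decomposition and the core bound.} I would decompose each of the six factors into Littlewood--Paley pieces and let $N_{(1)}\ge N_{(2)}\ge\cdots\ge N_{(6)}$ be the non-increasing rearrangement of the six frequencies. Since the spectral support of a product of spectrally localized functions lies in the sum of the individual supports---on $\S\times\S^2$ this combines the Fourier-support property on $\S$ with the spherical-harmonic degree property on $\S^2$ used in Step~2 of the proof of Proposition~\ref{prop:trilin_est}---the six-linear integral vanishes unless the two largest frequencies are comparable, $N_{(1)}\sim N_{(2)}$. For such a configuration I would group the six factors into two triples so that each triple contains exactly one of the two largest frequencies, apply the Cauchy--Schwarz inequality in $L^2(\tau_0\times M)$ to factor the integral into the product of the two associated tri-linear $L^2$-norms, and bound each by Proposition~\ref{prop:trilin_est_y}, using the invariance of the $Y^0$-norm under conjugation. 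Converting via $\|P_Nw_j\|_{Y^0}\ls N^{-1}\bigl(N\|e^{-it\Delta}P_Nw_j\|_{U^2}\bigr)$ for the five $X^1$-factors (the sequence in parentheses having $\ell^2$-norm $\ls\|w_j\|_{X^1(\tau_0)}$, by $U^2\hookrightarrow V^2$) and $\|P_{N_0}w_0\|_{Y^0}=N_0\bigl(N_0^{-1}\|e^{-it\Delta}P_{N_0}w_0\|_{V^2}\bigr)$ for the test function, the factors $N_2N_3$ from the two tri-linear estimates cancel exactly against the inverse derivative weights, so that the total frequency power collapses to $N_0^2\,N_{(1)}^{-1}N_{(2)}^{-1}\ls N_{(1)}/N_{(2)}\sim1$.

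\emph{Summation and main obstacle.} What remains is the dyadic summation over the six frequencies, which I expect to be the main difficulty. The two applications of Proposition~\ref{prop:trilin_est_y} each furnish an off-diagonal factor $(\tfrac{N_3}{N_1}+\tfrac1{N_2})^\delta$; splitting it via $(a+b)^\delta\le a^\delta+b^\delta$ and applying the Cauchy--Schwarz inequality in the $\ell^2$-sequences $(N\|e^{-it\Delta}P_Nw_j\|_{U^2})_N$ and $(N^{-1}\|e^{-it\Delta}P_Nw_0\|_{V^2})_N$, one sums the four lower frequencies down from the common top scale to constants and ties the two top coefficients (both at scale $\sim N_{(1)}\sim N_{(2)}$) to the corresponding norms, arriving at $\prod_{j=1}^5\|w_j\|_{X^1(\tau_0)}$ (the test-function norm contributing only a factor $\le1$). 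The argument must be organized into cases according to which of the six factors carries the top frequency and how the four lower frequencies are distributed between the two triples; in each case the comparability $N_{(1)}\sim N_{(2)}$ is what keeps the leading power $O(1)$, while the $\delta$-gain is precisely what renders the multi-index dyadic sum convergent.
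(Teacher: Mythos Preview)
Your proposal is correct and follows essentially the same approach as the paper: the paper's proof is only a sketch that refers the reader to \cite[Proposition~4.2]{H13} and \cite[Proposition~4.1]{HTT11}, and what you have written is precisely an unpacking of that argument---duality against $Y^{-1}$, dyadic decomposition, the constraint $N_{(1)}\sim N_{(2)}$, grouping into two triples with Cauchy--Schwarz, two applications of Proposition~\ref{prop:trilin_est_y}, and the $\delta$-gain summation. The only paper-specific remark you omit is that the extra contribution $\Sigma_2$ appearing in \cite{H13} (coming from the Zoll structure of $\S^3$) is void here, which simplifies matters rather than requiring additional input.
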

\begin{proof}[Proof (sketch)]
Due to the polynomial structure of the nonlinearity it suffices to prove an estimate for $\mathcal{I}(\prod_{j=1}^5\widetilde{u_j})$ where $\widetilde{u_j}$ denotes either $u_j$ or $\overline{u_j}$. This is treated exactly as in \cite[Proposition~4.2]{H13} (and \cite[Proposition~4.1]{HTT11}), where Proposition \ref{prop:trilin_est_y} is the replacement for \cite[Corollary~3.7]{H13}. Note that the contribution $\Sigma_2$ in \cite[pp.\ 1285--1287]{H13} is void in the case $M=\S\times\S^2$ (but \cite[Lemmas~3.3 and 3.4]{H13} hold true on any smooth compact Riemannian $3$-manifold $M$).
\end{proof}

To conclude the proof of Theorem \ref{thm:main} one can iterate the
local well-posedness to arbitrarily large time intervals $[0,T)$ by
using the conservation of the mass and the energy, see \cite[pp.\
344--347]{HTT11} for more details.

\section{On the necessity of the tri-linear estimate}\label{sect:nec}
As explained above, the tri-linear estimate in Proposition
\ref{prop:trilin_est} on an arbitrary compact boundary-less $3$-dimensional
Riemannian manifold $M$ is sufficient to conclude small data global
well-posedness in $H^1(M)$. The proof relies on the contraction
mapping principle, which implies that the flow map $F\colon u_0\mapsto u$
is smooth.

Conversely, we can show that the version of the  tri-linear estimate in Proposition~\ref{prop:trilin_est}
with $\delta=0$ is necessary for local
well-posedness with a smooth flow. We follow the argument of
\cite[Remark~2.12]{BGT05a}, which concerns bi-linear estimates in the context of the
cubic NLS.

Fix $T>0$ and consider the map
\[F\colon H^1(M) \to H^1(M), \quad F(u_0)=u(T),\]
where $u$ is a solution of \eqref{eq:nls} with initial data
$u(0)=u_0$. The fifth order differential of $F$ at the origin is
given by
\begin{multline*}
D^5F(0)(h_1,\ldots,h_5)\\
=\mp 12i\int_0^Te^{i(T-\tau)\Delta_g}\sum_{\sigma}
H_{\sigma(1)} (\tau)\overline{H_{\sigma(2)}}(\tau) H_{\sigma(3)}(\tau)
\overline{H_{\sigma(4)}}(\tau) H_{\sigma(5)}(\tau)\, d\tau,
\end{multline*}
where $H_j(\tau):=e^{i\tau\Delta_g}h_{j}$ and we sum over the $10=\binom{5}{2}$
of the $5!=120$
permutations $\sigma \in S_5$ which give rise to different pairs
$(\sigma(2),\sigma(4))$. Indeed, from \eqref{eq:int} it follows that
$DF(0)(h)=e^{iT\Delta_g}h$, $D^j F(0)=0$ for $2\leq j \leq 4$ and we
obtain the above formula.
If we specify to $h_2=h_3=h_4=h_5$ we obtain two
contributions
\[
\sum_{\sigma}
H_{\sigma(1)} \overline{H_{\sigma(2)}} H_{\sigma(3)}
\overline{H_{\sigma(4)}}H_{\sigma(5)} =6H_1|H_2|^4+4\overline{H_1}H_2^3\overline{H_2}.
\]
Now, let us assume that $D^5F(0)\colon \bigl(H^1(M)\bigr)^5\to H^1(M)$ is bounded. Then, we infer
\[\biggl|\int_M D^5F(0)(h_1,h_2,\ldots,h_2) \overline{H_1}(T)\,dx\biggr|\ls\|h_1\|_{H^1}\|h_1\|_{H^{-1}}\|h_2\|_{H^1}^4.\]
Because of
\[
\Re\{ 6|H_1|^2|H_2|^4+4\overline{H_1}^2H_2^3\overline{H_2}\}\geq 2|H_1|^2|H_2|^4,
\]
we conclude that
\[
\int_0^T\int_M |H_1|^2|H_2|^4 \,dx \,dt\ls \|h_1\|_{H^1}\|h_1\|_{H^{-1}}\|h_2\|_{H^1}^4.
\]
We set $h_1=P_{N_1}\phi_1$, and for $\phi_2,\phi_3\in H^1(M)$ we write
\[
e^{it\Delta_g}\phi_2 e^{it\Delta_g}\phi_3=\frac{1}{4}\bigl\{(e^{it\Delta_g}\phi_2+e^{it\Delta_g}\phi_3)^2-(e^{it\Delta_g}\phi_2-e^{it\Delta_g}\phi_3)^2\bigr\}\]
to obtain the
bound
\begin{multline*}
\|e^{it\Delta_g}P_{N_1}\phi_1 e^{it\Delta_g}\phi_2
e^{it\Delta_g}\phi_3 \|_{L^2([0,T]\times M)}\\
\ls \|P_{N_1}\phi_1\|_{L^2(M)}\|\phi_2\|_{H^1(M)}\|\phi_3\|_{H^1(M)},
\end{multline*}
which implies the estimate in Proposition \ref{prop:trilin_est}, but only with $\delta=0$.

\bibliography{literature}\bibliographystyle{amsplain}\label{sect:refs}

\end{document}